\numberwithin{equation}{section}
\newtheorem{thm}{THEOREM}[section]
\newtheorem{lem}[thm]{Lemma}
\newtheorem{cor}[thm]{Corollary}
 \theoremstyle{definition}
\theoremstyle{remark}
\newtheorem{rem}{Remark}[section]
\newcommand{\RP}{\mathbb{R\mkern1mu P}}
\newcommand{\tref}[1]{Theorem~\ref{#1}}
\newcommand{\cref}[1]{Corollary~\ref{#1}}
\newcommand{\lref}[1]{Lemma~\ref{#1}}
\newcommand{\pr}{\mathrm{pr}}
\newcommand{\folL}{\mathcal{L}}
\newcommand{\he}{\hat{e}}\newcommand{\tY}{\tilde{Y}}\newcommand{\tS}{\tilde{S}}
\newcommand{\gS}{\mathsf{S}}
\newcommand{\gQ}{\mathsf{Q}}
\newcommand{\gT}{\mathsf{T}}
\newcommand{\tX}{\tilde{X}}\newcommand{\hX}{\hat{X}}\newcommand{\Aut}{\mathrm{Aut}}
\newcommand{\hB}{\hat{B}}\newcommand{\bg}{\bar{g}}
\newcommand{\fF}{\mathbb{F}}\newcommand{\hM}{\hat{M}}
\newcommand{\id}{\mathrm{id}}
\newcommand{\fN}{\mathbb{N}}\newcommand{\fQ}{\mathbb{Q}}
\newcommand{\R}{\mathbb{R}}\newcommand{\C}{\mathbb{C}}\newcommand{\Z}{\mathbb{Z}}\newcommand{\Spin}{\mathsf{Spin}}
\newcommand{\SO}{\mathsf{SO}}\newcommand{\Sph}{\mathbb{S}}
\newcommand{\Or}{\mathsf{O}}\newcommand{\gN}{\mathsf{N}}
\begin{document}
\pagebreak


\title{Riemannian foliations of spheres}

\author{Alexander Lytchak}
\thanks{The first named author was partially supported by a Heisenberg grant of the
DFG and both authors by the SFB~``Groups, Geometry and Actions''}
\address{Mathematisches Institut\\ Universit\"at zu K\"oln\\
Weyertal 86-90, 50931 K\"oln, Germany\\}
\email{alytchak\@@math.uni-koeln.de}
\author{Burkhard Wilking}
\address{Mathematisches Institut\\ Universit\"at M\"unster\\
Einsteinstr. 62, 48147 M\"unster, Germany\\}
\email{wilking\@@uni-muenster.de}
\subjclass{53C12,  57R30}




\begin{abstract}
We show that a Riemannian foliation on a topological $n$-sphere has leaf 
dimension $1$ or $3$ unless $n=15$ and the Riemannian foliation is given by the fibers 
of a Riemannian submersion to an $8$-dimensional sphere.
This allows us to 
classify Riemannian foliations on round spheres up to metric congruence.
\end{abstract}

\maketitle
\renewcommand{\theequation}{\arabic{section}.\arabic{equation}}
\pagenumbering{arabic}


\section{Introduction}

We are going to prove the final piece of the following theorem:
\begin{thm}\label{cor: main}\label{thm: main} Suppose $\mathcal F$ is a Riemannian foliation by $k$-dimensional leaves
of a compact manifold $(M,g)$  which is homeomorphic to $\Sph^n$. 
We assume $0<k<n$.
Then one of the following holds
\begin{enumerate}
\item[a)] $k=1$  and the foliation is given by an isometric flow, with respect to some 
Riemannian metric. 
\item[b)] $k=3$, $n\equiv 3\,\mathrm{ mod }\,4$ and the  generic leaves are diffeomorphic to 
$\RP^3$ or $\Sph^3$.
\item[c)] $k=7$, $n=15$ and $\mathcal F$ is given by the fibers of a Riemannian submersion $(M,g)\rightarrow (B,\bar g)$
where $(B,\bg)$ is homeomorphic to $\Sph^8$ and the fiber is homeomorphic to $\Sph^7$.
\end{enumerate}
Furthermore all these cases can occur.
\end{thm}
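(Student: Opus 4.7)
My plan is to combine the structure theory of Riemannian foliations with classical obstructions from algebraic topology coming from the fact that the ambient manifold is a sphere. The starting point is Molino's structure theorem: the closures of the leaves of $\mathcal F$ form a singular Riemannian foliation $\bar{\mathcal F}$, and the leaf space $B = M/\bar{\mathcal F}$ carries the structure of a Riemannian orbifold. Since $M$ is homeomorphic to $\Sph^n$, Alexander duality imposes that $B$ has the rational cohomology of a sphere and that the generic leaf closure has the rational cohomology of a sphere $\Sph^{\bar k}$, where $\bar k$ is its dimension.

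The next step is to analyze a generic leaf $L$ inside its closure $\bar L$. By Molino's theory, $L$ is, up to a finite cover, the total space of a principal torus bundle over a subspace of $\bar L$, so $\bar L$ carries (on a finite cover) a transitive action of a compact Lie group. The rational-sphere condition on $\bar L$ then severely restricts its topology: essentially $L$ must be a Lie group, or a sphere admitting a transitive action, of dimension $k$. This part also delivers the $\RP^3/\Sph^3$ dichotomy in case b), via a holonomy analysis along a generic leaf.

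The crucial topological input is Adams' solution of the Hopf invariant one problem, together with classical theorems on the topology of finite $H$-spaces. Together with a Poincar\'e-Hopf argument applied to the tangent distribution of $\mathcal F$, regarded as a $k$-plane subbundle of $T\Sph^n$, these results restrict $k$ to $\{1,3,7\}$ and give the parity condition $n \equiv 3 \pmod{4}$ in case b), since a $3$-dimensional subbundle of $T\Sph^n$ requires $n \equiv 3 \pmod 4$.

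The main obstacle, and what I take to be the ``final piece'' the paper sets out to prove, is the case $k=7$. Here I must establish that a $7$-dimensional Riemannian foliation on a topological sphere forces $n=15$, and that the foliation is globally the fibration by fibers of a Riemannian submersion onto a topological $\Sph^8$ with $\Sph^7$-fibers. The hardest points are to exclude foliations with non-closed leaves, to exclude orbifold singularities in the leaf space, and to show that the only admissible geometry is modeled on the octonionic Hopf fibration $\Sph^7 \hookrightarrow \Sph^{15} \to \Sph^8$. I expect the key difficulty to be the absence, in dimension $7$, of the higher-rank projective analogues $\CP^n$ and $\HP^n$ available for $k=1,3$: one must produce a direct geometric argument that pins down the base as $\Sph^8$ and promotes $\mathcal F$ to a submersion, rather than relying on an iterative reduction.
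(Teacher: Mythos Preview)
Your outline for the ``known'' cases is broadly in the right spirit, but several of the reductions you invoke are not quite the ones that actually work. You do not need Molino's leaf-closure machinery here: Ghys proved directly that on a homotopy sphere the leaves are closed unless $k=1$, and that the generic leaf is a rational homology sphere. Haefliger then gives a genuine fiber bundle $\hat M\to\hat B$ with fiber the principal leaf $\mathcal L$, and since $\mathcal L$ is null-homotopic in $\hat M$ it is an $H$-space; Browder's classification of $H$-space rational homology spheres (not Adams' Hopf-invariant-one theorem) is what pins $\mathcal L$ down to $\Sph^1,\RP^3,\Sph^3,\RP^7,\Sph^7$. Your proposed Poincar\'e--Hopf argument for $n\equiv 3\pmod 4$ is suspect: a $3$-plane subbundle of $T\Sph^n$ is not the same as three independent vector fields, and there is no clean obstruction of that form. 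The congruence comes instead from the Gysin sequence of $\hat M\to\hat B$.

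For $k=7$, which is the actual content of the paper, your proposal is not a strategy but a restatement of the problem. You say you must ``establish $n=15$'', ``exclude orbifold singularities'', and ``produce a direct geometric argument'', but you give no mechanism for any of these. The paper's methods are quite specific and there is no indication you have them in mind: the reduction to $n=15$ uses the Gysin sequence together with Steenrod powers $P^i$ at the prime~$3$ applied to the Thom space of $\hat M\to\hat B$ (an Adem-relation argument showing $c^3=0$ unless certain intermediate cohomology groups are nonzero). The exclusion of orbifold singularities in $B$ is the heart of the paper and is entirely cohomological, not geometric: one first uses Stiefel--Whitney and Pontryagin classes of the orbifold tangent bundle, restricted to $B\Gamma_x$ for isotropy groups $\Gamma_x$, to force all isotropy to be cyclic of odd order; then, for each odd prime $p$, one studies the $p$-singular stratum $B_p\subset B$ and shows via Euler-class computations and a delicate analysis of a fourfold cover that its existence would produce forbidden classes in $H^{\ast}(\hat B,\fF_p)$. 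None of this is ``modeled on the octonionic Hopf fibration'' in any direct way. Until you supply concrete tools at this level, the $k=7$ case remains open in your proposal.
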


A big part of the Theorem follows by putting  together various pieces in the literature: 
Ghys (\cite{ghys}) showed that the generic leaves of a Riemannian foliation of a homotopy sphere are closed,
unless the leave dimension is $1$ and the foliation is given by an isometric flow with respect to 
a possibly different Riemannian metric.
Furthermore, the generic leaves are rational homotopy  spheres. Haefliger (\cite{classify}) observed
that for any  Riemannian foliation of a complete manifold $M$ with 
closed leaves, one can find a space $\hat M$ homotopically equivalent to $M$ such 
that $\hat M$ is the total space of a fiber  bundle, where the fibers are homeomorphic to the generic
leaves of the foliation (see section~\ref{sec: top} for further details). If $M$ is a sphere then the
fibers are contractible in $\hat M$. Spanier and Whitehead  observed (\cite{spanier}) that for any such
fibration the fiber must be 
an $H$-space. Furthermore, closed manifolds which are  $H$-spaces and rational homotopy spheres were classified
by  Browder (\cite{Browder}): they are homotopically equivalent to $\Sph^1$, $\RP^3$, $\Sph^3$, $\RP^7$ or $\Sph^7$.
With Perelman's solution of the geometrization conjecture 
one can improve the statement further to diffeomorphic if $k=3$.
 
 We are left to consider $7$-dimensional foliations of homotopy spheres. Our strategy will be 
 reduce the situation first to the case of $n=15$ and then show that the foliation is simple, i.e. given 
 by the fibers of a Riemannian submersion. 
 By a result of Browder  (\cite{Browder})  this automatically rules out the possibility 
 of an $\RP^7$-foliation.
 
To see that all examples can occur, we can again appeal to the literature
for the  only non-classical case: the existence of $\RP^3$ foliations on $\Sph^{4k+3}$.
 It was shown by Oliver (\cite{Oliver}), that contrary to a previous conjecture, there are almost free 
  smooth 
actions of $\SO(3)\cong \RP^3$ on $\Sph^{4k+3}$ for $k\ge 1$. 
The actions of Oliver extend to fixed point free smooth actions on the disc $D^{4k+4}$, 
different actions were later exhibited  by Grove and Ziller (\cite{groveziller}). 
 
 Our topological result allows us to classify Riemannian foliations of the round sphere up to metric congruence. 
We recall 
that Gromoll and Grove (\cite{gromollsph} classified Riemannian  foliations of the sphere up to leave dimension 3. 
Moreover,  due to \cite{Wilking},  a Riemannian submersion of the round $\Sph^{15}$ 
with $7$-dimensional fibers is metrically congruent to the Hopf fibration.
Combining this work with Theorem~\ref{cor: main} gives 

\begin{cor}\label{thm: metric}
Let $\mathcal F$ be a Riemannian foliation on a round sphere $\Sph^n$ with leaf dimension $0<k<n$.
Then, up to isometric congruence, either $\mathcal F$ is given by the orbits of an isometric action of $\R$ 
or $\gS^3$ with discrete isotropy groups  
or it is the Hopf fibration of $\Sph^{15}\to \Sph^8(1/2)$ with fiber $\Sph^7$.
\end{cor}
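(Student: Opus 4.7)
The plan is to deduce the corollary as a direct consequence of Theorem~\ref{thm: main}, combined with the two classification results already cited in the introduction: the Gromoll--Grove classification of Riemannian foliations of the round sphere in leaf dimension at most three \cite{gromollsph}, and Wilking's rigidity theorem \cite{Wilking} that every Riemannian submersion of the round $\Sph^{15}$ with seven-dimensional fibers is metrically congruent to the Hopf fibration. The main intellectual content lies in Theorem~\ref{thm: main} itself; the corollary is essentially a packaging of the three cases of that theorem, specialised to the round metric.

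First, I would apply Theorem~\ref{thm: main} to $\mathcal F$. Its conclusion forces one of three mutually exclusive possibilities on the leaf dimension $k$: either $k=1$, or $k=3$, or $k=7$ with $n=15$ and $\mathcal F$ given by the fibers of a Riemannian submersion onto an $8$-dimensional base. In the first two cases the Gromoll--Grove classification applies and identifies $\mathcal F$, up to isometric congruence on the round sphere, with the orbit foliation of an isometric action of $\R$ (for $k=1$) or of $\gS^3$ (for $k=3$). Because the orbit dimension equals the leaf dimension in both cases, the isotropy groups are automatically discrete: the codimension-one closed subgroups of $\R$ are discrete, and the codimension-three closed subgroups of $\gS^3$ are finite. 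This yields exactly the first alternative of the corollary.

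It remains to handle the case $k=7$, $n=15$. Theorem~\ref{thm: main}(c) already provides a Riemannian submersion from the round $\Sph^{15}$ onto an $8$-dimensional manifold with $7$-dimensional fibers, so the hypothesis of Wilking's theorem is met. That theorem then identifies the submersion, up to isometric congruence, with the classical Hopf fibration $\Sph^{15}\to \Sph^8(1/2)$ with fiber $\Sph^7$. Assembling the three cases gives the stated trichotomy; no serious obstacle arises once Theorem~\ref{thm: main} has been secured, as every nontrivial step is supplied by one of the cited theorems.
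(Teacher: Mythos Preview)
Your proposal is correct and follows exactly the same route as the paper: apply Theorem~\ref{thm: main} to reduce to the three cases $k=1$, $k=3$, and $(k,n)=(7,15)$, then invoke the Gromoll--Grove classification \cite{gromollsph} for $k\le 3$ and Wilking's rigidity theorem \cite{Wilking} for the Riemannian submersion in the last case. The paper states this deduction in the paragraph immediately preceding the corollary rather than as a formal proof, so your write-up is in fact more detailed than what the paper provides.
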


As has been pointed out by Gromoll and Grove a real representation $\rho\colon  \gS^3\rightarrow \SO(n)$ induces 
an almost free action of $\gS^3$ on the unit sphere if and only if all irreducible subrepresentations are even dimensional.

 The paper is structured as follows. In Section \ref{sec: top} we recall 
the results stated after \tref{thm: main} and study the fibration $\hat M \to \hat B$
from a homotopy $n$-sphere $\hat M$ to the resolution $\hat B$ of the orbifold $B=M/\mathcal F$.  The fiber  of the fibration is  $\mathcal L$,  the principal leaf of $\mathcal F$, and  we  only need to consider the cases $\mathcal L =  \Sph ^7$ and $\mathcal L=  \RP ^7$. From this fibration we compute the cohomology of $\hat B$.
These computations show  $n=15$.  Moreover, the cohomology
of $\hat B$ at all primes but $2$ is concentrated in dimensions $8k,8k-1$.
In the two subsequent sections, we exclude the possibility that the orbifold $B$ is not
a manifold. Here we  use the local data of the orbfold  to find non-trivial cohomology classes of $\hat B$ that cannot exist by the previous computations.
We rely on the fact that all isotropy groups of $B$ act  freely on a $7$-dimensional sphere or a projective space, a  sever  restriction on the possible group structure.
In Section \ref{sec: even}, we use the computation of the cohomology of $\hat B$ at the prime $2$, to deduce that all isotropy groups are cyclic of odd order.   Here we  detect the forbidden classes by loooking at single points of $B$, i.e., by finding the  non-trivial restrictions of the cohomology classes to the classifying spaces of the isotropy groups.
In Section \ref{sec: p odd},   we exclude the possibility  that the set $B_p$  of points with 
non-trivial $p$-isotropy is non-empty, otherwise detecting forbidden cohomology classes by their non-trivial restriction to a component of $B_p$.

\section{Topology}\label{sec: top}
\subsection{Recollection}
Let $(M, \mathcal F)$ be as in \tref{cor: main} and assume that the leaves have dimension $k\ge 2$.
Due to \cite{ghys}, all leaves 
of $\mathcal F$ are closed. This in turn is equivalent to saying that $\mathcal F$ is  a  \emph{generalized  Seifert fibration}
 on $M$, i.e., the space of leaves $B=M/\mathcal F$ carries the natural structure of a smooth Riemannian 
 orbifold, such that the induced Riemannian distance corresponds to the distance between leaves in $M$.
 Due to  \cite{ghys},  the regular leaf $\mathcal L$ of $\mathcal F$ is a rational homology sphere.
 Following Haefliger, we consider the $\SO(n-k)$ bundle $FrM$ over $M$ given by all oriented horizontal 
 frames in $M$. Then  the Riemannian foliation $\mathcal F$ induces a fiber bundle structure on 
 $FrM$ with the fibers being diffeomorphic to $\mathcal L$ and with the base space 
 being the oriented frame bundle  $Fr B$ of the orbifold $B$.  
 Furthermore, the natural fiber bundle map  $Fr M\rightarrow FrB$ is $\SO(n-k)$-equivariant. 
 
 Thus one also gets a fiber bundle with total space given by 
  $\hM= Fr M\times_{\SO(n-k)}E\SO(n-k)$ with fiber $\mathcal L$ and  with base space 
   $\hB:=FrB\times_{\SO(n-k)}E\SO(n-k)$, $f\colon \hM\rightarrow \hB$.
  Clearly $\hM$ is homotopically equivalent to $M$ and $\hB$ is the so called 
  resolution (or classifying space) of the orbifold $B$. 
  Its cohomology is the so called orbifold cohomology of $B$. 
  As has been oberserved by Haefliger the natural projection $\hB\to B$ is  a rational homotopy equivalence.

Since the fiber $\mathcal L$ is a $k$-dimensional manifold and $\hM\sim_{heq} M\sim_{heq} \Sph^n$
 is $k$-connected, we see that 
the fiber  is contractible in $\hM$. Therefore $\mathcal L$ is an $H$-space (\cite{spanier}).
Since $\mathcal L$ is a rational homology sphere, we may apply  \cite{Browder}
and deduce that $\mathcal L$ is homotopy equivalent to $\RP^3$, $\Sph^3$, $\Sph^7$ or $\RP ^7$.
An application of the geometrization conjecture  proves \tref{cor: main}  for $k=3$.
Thus we only need to consider  the case $k=7$.

Thus $\mathcal L$ is  either homeomorphic to   $\Sph^7$  or it is homotopy equivalent to
 $\RP ^7$ and its double-cover is homeomorphic to $\Sph^7$.   
 We call the first case the \emph{spherical case} and the second case the \emph{projective case}.

\subsection{Gysin sequence and dimension} Let $R$ be any ring with unit. In the projective case 
we assume in addition that $2$ is invertible in $R$, e.g., 
$\fF_3$ or $\fQ$.
 Then $H^{\ast} (\folL, R ) = H^{\ast} (\Sph^7, R )$.
 Thus we find  the Gysin sequence of the fibration $f$ with coefficients in $R$.
 The Euler class  $a\in H^8(\hat B,R)\cong H^0(\hat B,R)\cong R$ is a generator.
 Moreover,  the cup product  $\cup a : H^{2i} (\hat B) \to H^{2i+8} (\hat B)$
 is an isomorphism, if  $2i \neq n -7$.

Since $\hB$ has finite rational cohomology, we use this isomorphism for $R=\fQ$ 
to see that $n=8l +7$, for some positive integer $l$.

\subsection{Reduction to $n=15$.}
We want to show $l=1$. 
Assume  on the contrary $l\ge 2$.  Then, due to the above isomorphism, we have $H^{\ast} (\hat B, \fF _3)=
\fF_3 [a]$ in degrees $\leq 16$.  To obtain a contradiction, we first show:

\begin{lem}
Under the assumptions above  there exists a space $X$ and an element 
$c\in H^8 (X,\fF _3)$  such that the cohomology ring  $H^{\ast} (X,\fF_3)$  equals
the polynomial ring $\fF_3 [c]$  in degrees $\leq 24$.
\end{lem}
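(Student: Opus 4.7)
The plan is to extend the given polynomial structure on $H^{\ast}(\hB,\fF_3)$ from degrees $\leq 16$ up to degrees $\leq 24$ by analyzing the Gysin sequence of the fibration $\folL\to\hM\to\hB$ with $\fF_3$-coefficients, splitting by the value of $l$. Throughout I use that $H^{\ast}(\folL,\fF_3)=H^{\ast}(\Sph^7,\fF_3)$, so the sequence takes the form appropriate to a $7$-sphere fibration with Euler class $a\in H^8(\hB,\fF_3)$.

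If $l\geq 3$, then $n=8l+7\geq 31$ and $H^i(\hM,\fF_3)=0$ for every $0<i\leq 24$. The Gysin sequence immediately forces the cup product $\cup a\colon H^{i-8}(\hB,\fF_3)\to H^i(\hB,\fF_3)$ to be an isomorphism for every $1<i\leq 24$, which extends the polynomial ring $\fF_3[a]$ from degrees $\leq 16$ to degrees $\leq 24$. In this case $X=\hB$ and $c=a$ already works.

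For $l=2$, so $n=23$, the same Gysin computation still yields $H^i(\hB,\fF_3)=0$ for $17\leq i\leq 22$, but the top class of $\hM$ enters the sequence and produces the four-term exact piece
$$0\to H^{23}(\hB,\fF_3)\to \fF_3\xrightarrow{f_{\ast}}H^{16}(\hB,\fF_3)\xrightarrow{\cup a}H^{24}(\hB,\fF_3)\to 0,$$
where $f_{\ast}$ is fiber-integration; $f_{\ast}$ is either zero or an isomorphism. In the first case $H^{23}(\hB,\fF_3)=\fF_3$ and $H^{24}(\hB,\fF_3)=\fF_3\cdot a^3$ with $a^3\neq 0$. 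I would then define $X$ as the mapping cone $\hB\cup_\phi D^{24}$ of a representing map $\phi\colon\Sph^{23}\to\hB$ of the unique nontrivial class in $H^{23}(\hB,\fF_3)$, and the cofiber long exact sequence gives $H^{\ast}(X,\fF_3)=\fF_3[a]$ in degrees $\leq 24$.

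The main obstacle is the remaining subcase, in which $f_{\ast}$ is an isomorphism. Here $a^3=0$ already in $\hB$, and since the cup product is natural under cell attachment along $\hB\hookrightarrow X$, no CW-modification of $\hB$ can revive the product $a\cdot a^2$; $\hB$ genuinely fails to carry a polynomial structure in degree $24$, so a different construction of $X$ is needed. My plan for this subcase is either to rule it out by an auxiliary Steenrod-operation argument using the concentration of $H^{\ast}(\hB,\fF_3)$ in degrees divisible by $8$, or to construct $X$ from a higher Stasheff-type projective approximation of the H-space $\folL$, or else from the fiberwise join $\hM\ast_{\hB}\hM$ whose fiber $\folL\ast\folL$ has the mod $3$ cohomology of $\Sph^{15}$ and whose Euler class is $a^2$, and to verify the polynomial structure directly via its Gysin sequence.
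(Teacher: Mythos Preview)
Your case $l\geq 3$ is correct: $X=\hB$ already works. The problems lie entirely in $l=2$.

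In your first subcase the cell attachment is not justified. A class in $H^{23}(\hB,\fF_3)$ is not ``represented'' by a map $\phi\colon\Sph^{23}\to\hB$; what you actually need is a class in $\pi_{23}(\hB)$ with nonzero mod-$3$ Hurewicz image, and since $\hB$ already has nontrivial homotopy in degree $8$ there is no Hurewicz-type reason for such a class to exist. You give no argument for this. (In fact this subcase cannot occur: the Gysin sequence also yields $H^{20}(\hB,\fF_3)=0$, and together with $H^{12}(\hB,\fF_3)=0$ the Adem relation of the next lemma, applied directly to $\hB$, forces $a^3=0$.) Your second subcase is simply not proved: you list three strategies without carrying any of them out. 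The first is circular, since $a^3=0$ is exactly what the Steenrod relations predict for $\hB$, so there is nothing further to extract from $\hB$ alone; the other two are plausible directions but require real work you have not done.

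The paper bypasses the case split entirely by taking $X$ to be the Thom space of the spherical fibration $f\colon\hM\to\hB$. The Thom isomorphism $b\mapsto f^*(b)\cup c$ identifies $H^{*}(\hB,\fF_3)$ with $\tilde H^{*+8}(X,\fF_3)$, and since the Thom class satisfies $c^{k+1}=f^*(a^{k})\cup c$, the known polynomial structure of $H^{*}(\hB,\fF_3)$ in degrees $\leq 16$ becomes the polynomial structure $\fF_3[c]$ on $X$ in degrees $\leq 24$. The point is that the degree shift by $8$ means one never needs any information about $H^{*}(\hB,\fF_3)$ above degree $16$, so the question of whether $a^3$ vanishes in $\hB$ is irrelevant. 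Your fiberwise-join idea is in the same spirit but overshoots; the single Thom construction already suffices.
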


\begin{proof} For $l >2$, one could just take $X=B$. In general,  
let $E_f$ be the mapping cylinder of $f$, which is a fiber bundle over $\hat B$ 
with fiber being the cone over $\mathcal L$.  Let $X$ be the \emph{Thom space} of
the fibration $f$, which is obtained from $E_f$ by identifying all points on the boundary of $E_f$.
For the subbundle $E' =\hat B$ of the bundle $E_f \to \hat B$, we can apply  \cite{Hatcher},
Theorem 4.D.8.  Using  the fact that the bundle $\hat M \to \hat B$ is orientable,
we deduce that there is an element
$c\in  H^8 (E, E',\fF_3) = H^8 (X, \fF_3) $ (the \emph{Thom class} of the fibration), such that 
$b\to f^{\ast} (b) \cup c$ induces an isomorphism between  $H^{\ast} (\hat B)$ and the 
reduced cohomology $\tilde H^{\ast +8} (X ,\fF_3 )$. 

The claim follows from this isomorphism and the structure of $H^{\ast} (\hat B)$.
 \end{proof}

We now get a contradiction to the following
application of Steenrod powers,  cf. \cite{Hatcher}, Theorem  4.L.9.


%


\begin{lem}  \label{Steenrod}
Let $X$ be a topological space. Assume that $H^{12} (X, \mathbb F_3) = H^{20} (X, \mathbb F_3) =0$. 
Then there is no element $c\in H^8 (X, \mathbb F_3)$, with $c^3 \neq 0$.  
\end{lem}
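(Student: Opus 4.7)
The plan is to apply mod-$3$ Steenrod power operations $P^i$ to the putative element $c\in H^8(X,\fF_3)$. Recall that at $p=3$ each $P^i$ raises cohomological degree by $4i$, satisfies $P^i(x)=x^3$ whenever $\deg x = 2i$, and vanishes on classes of degree smaller than $2i$. Thus $P^4(c)=c^3$, while $P^3(c)\in H^{20}(X,\fF_3)=0$ and $P^1(c)\in H^{12}(X,\fF_3)=0$ by hypothesis.

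The key step is to factor $P^4$ through $P^3$ by means of an Adem relation, so that the vanishing of $P^3(c)$ forces $c^3=0$. The standard Adem relation at an odd prime $p$ reads, for $a<pb$,
\[ P^a P^b \;=\; \sum_{t\ge 0}(-1)^{a+t}\binom{(p-1)(b-t)-1}{a-pt}\,P^{a+b-t}P^t. \]
Taking $p=3$, $a=1$, $b=3$, the bound $1<9$ is satisfied, and only the term $t=0$ contributes (for $t\ge 1$ the lower entry of the binomial is negative). The relation thereby collapses to
\[ P^1 P^3 \;=\; -\binom{5}{1}P^4 \;=\; -5\,P^4 \;\equiv\; P^4\pmod 3. \]
Evaluating both sides on $c$ then yields
\[ c^3 \;=\; P^4(c) \;=\; P^1\!\bigl(P^3(c)\bigr) \;=\; P^1(0)\;=\;0, \]
contradicting the assumption $c^3\ne 0$.

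The only delicate point in this route is the Adem computation itself: one must substitute carefully into the general formula, check that the higher-$t$ terms drop out, and reduce $-5$ modulo $3$. Once that is in hand the rest is purely formal from the unstability axiom $P^i(c)=c^3$ on $H^{2i}$. I remark in passing that in this particular argument only $H^{20}(X,\fF_3)=0$ is actually used; the companion hypothesis $H^{12}(X,\fF_3)=0$ (which would be needed were one to try to run $P^4$ the other way, through $P^3 P^1$) is not required, though of course it is consistent with the conclusion.
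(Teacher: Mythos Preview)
Your proof is correct and follows the same approach as the paper: identify $c^3=P^4(c)$ via unstability, then use an Adem relation to factor $P^4$ and invoke the vanishing hypotheses. The paper states only that $P^4$ is a linear combination of $P^1P^3$ and $P^3P^1$, whereas your explicit computation shows $P^4=P^1P^3$ in $\fF_3$ with no $P^3P^1$ contribution; your resulting observation that only the hypothesis $H^{20}(X,\fF_3)=0$ is actually needed is a small sharpening.
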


\begin{proof}
Consider the Steenrod operations 
$P^i\colon H^n (X, \mathbb F_3 ) \to H^{n + 4i} (X, \mathbb F_3)$.  We have 
$c^3 =P^4 (c)$.
  On the other hand, by the Adem relations, $P^4(c) $ is a  linear combination
  of $P^1(P^3(c))$ and $P^3(P^1(c))$, which must be zero, since the corresponding cohomology groups are trivial.
  Thus $c^3=0$.
\end{proof}

{

The  contradiction shows $l=1$, hence $n=15$. 
 Thus $B$ has dimension $8$ and $\hat B$ has the  rational homology of $\Sph^8$.

\subsection{Cohomology of $\hat B$} From the Gysin sequence
of the fibration $f:\hat M \to \hat B$ we deduce:

\begin{lem}\label{lem: cohomology}\label{cohom}
Let $p$ be a prime number, with $p\neq 2$ in the projective case.
Then either $\hat B$ is an $\mathbb F_p$-homology sphere, or the 
$\mathbb F_p$-cohomology ring of $\hat B$ has the form
$$H^{\ast } (\hat B, R) = R [a,b]/ b^2 =0,$$
where $b$  has degree $15$ and $a$ has degree $8$.
\end{lem}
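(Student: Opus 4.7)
The plan is to read off the cohomology of $\hat B$ directly from the Gysin sequence of the oriented $\Sph^7$-bundle $f\colon \hat M \to \hat B$ with coefficients in $R = \fF_p$; in the projective case $\folL \simeq \RP^7$ becomes an $\fF_p$-cohomology $7$-sphere precisely because $2$ is invertible, which is why that hypothesis is imposed. Since $\hat M \simeq \Sph^{15}$, so that $H^j(\hat M, R)$ vanishes except for $j = 0, 15$, the Gysin sequence immediately tells us that $\cup a \colon H^j(\hat B, R) \to H^{j+8}(\hat B, R)$ is an isomorphism for every $j \neq 7, 8$.

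First I would work out the low degrees: the Gysin sequence together with $H^j(\hat M, R) = 0$ for $0 < j < 15$ and $H^{j-8}(\hat B, R) = 0$ for $j < 8$ yields $H^j(\hat B, R) = 0$ for $1 \leq j \leq 7$. Combined with the isomorphism $\cup a\colon H^0 \cong H^8$, this gives $H^0 = H^8 = R$ and $H^j = 0$ for $1 \leq j \leq 7$; the shift $\cup a$ then forces $H^j(\hat B, R) = 0$ for $9 \leq j \leq 14$ as well.

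The whole dichotomy of the lemma collapses to the single exact segment
\[0 \to H^{15}(\hat B, R) \to R \xrightarrow{\delta} R \xrightarrow{\cup a} H^{16}(\hat B, R) \to 0,\]
coming from the Gysin sequence in degrees $14$ through $16$. Either $\delta$ is an isomorphism, giving $H^{15}(\hat B, R) = H^{16}(\hat B, R) = 0$, after which iterating $\cup a$ upward shows $H^j(\hat B, R) = 0$ for every $j > 8$ and $\hat B$ is an $\fF_p$-homology sphere; or $\delta = 0$, producing generators $b \in H^{15}$ and $a^2 \in H^{16}$, each spanning a copy of $R$. Iterating $\cup a$ in the non-exceptional range $j \neq 7, 8$ then builds up the subring $R[a]$ together with the $b$-shifted copy in all degrees, while $b^2 \in H^{30}$ must vanish because periodicity identifies $H^{30}$ with $H^{22} \cong H^{14} = 0$. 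This yields the stated ring structure $R[a,b]/(b^2)$; the only subtle point is tracking which of the shifts $\cup a$ genuinely are isomorphisms, but there is no real obstacle here beyond bookkeeping.
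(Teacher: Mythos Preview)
Your argument is correct and is exactly the approach the paper has in mind: the paper states the lemma with the one-line justification ``From the Gysin sequence of the fibration $f:\hat M\to\hat B$ we deduce'', and you have spelled out that deduction accurately, including the key dichotomy coming from the map $\delta\colon H^{15}(\hat M)\to H^{8}(\hat B)$ being either zero or an isomorphism over the field $\fF_p$. The only point worth making explicit is that in the non-trivial case your exact segment also shows $\cup a\colon H^{8}\to H^{16}$ is an isomorphism (not just $H^{16}\cong R$), so $a^2$ genuinely generates $H^{16}$ and the iterated periodicity for the ring structure goes through; you allude to this in your final sentence, and it is indeed just bookkeeping.
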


 We will need:

\begin{lem}\label{4integer} $H^4(\hat B,\Z)=0$.
\end{lem}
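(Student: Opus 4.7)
My approach is to identify $H^4(\hat B,\Z)$ with a single entry on the $E_\infty$-page of the integral Serre spectral sequence of the fibration $f\colon \hat M\to \hat B$, and then read off its vanishing from $H^4(\hat M,\Z)=0$.

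First I would verify that $\hat B$ is simply connected. The long exact homotopy sequence of the fibration $\mathcal L\to \hat M\to \hat B$, combined with $\pi_1(\hat M)=0$ (since $\hat M\simeq \Sph^{15}$) and $\pi_0(\mathcal L)=0$, forces $\pi_1(\hat B)=0$. This both makes the local coefficient system on $H^{\ast}(\mathcal L,\Z)$ simple and, via Hurewicz together with universal coefficients, yields $H^1(\hat B, A)=0$ for every abelian group $A$.

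Next I would examine the integral Serre spectral sequence $E_2^{p,q}=H^p(\hat B, H^q(\mathcal L,\Z))\Rightarrow H^{p+q}(\hat M,\Z)$ at the slot $(4,0)$. Outgoing differentials from $E_r^{4,0}$ land in $q<0$ and are automatically zero. Incoming differentials originate from $E_r^{4-r,r-1}$, and only $r\in\{2,3,4\}$ can contribute since $r\ge 5$ forces $p<0$. The entries at $(2,1)$ and $(0,3)$ vanish because $H^1(\mathcal L,\Z)=H^3(\mathcal L,\Z)=0$ in both the spherical and projective cases, using that $\RP^7$ has integer cohomology concentrated in degrees $0,2,4,6,7$. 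At $(1,2)$ the coefficient group is $H^2(\mathcal L,\Z)$, which is $0$ in the spherical case and $\Z/2$ in the projective case; in the projective case $E_2^{1,2}=H^1(\hat B,\Z/2)=0$ by the simple-connectedness established above. Hence $E_\infty^{4,0}=E_2^{4,0}=H^4(\hat B,\Z)$ is a subquotient of $H^4(\hat M,\Z)=0$, and the claim follows.

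The one delicate point is the projective case at the prime $2$, where $\lref{lem: cohomology}$ gives no direct information; what makes the plan succeed is the fortunate fact that the integer cohomology of $\RP^7$ vanishes in precisely the odd degrees $1,3,5$ needed to cut off every differential into the slot $(4,0)$, so that $(4,0)$ is effectively isolated on the $E_r$-page throughout.
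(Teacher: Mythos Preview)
Your proof is correct and takes a genuinely different route from the paper's. The paper argues via homotopy groups: from the long exact sequence of the fibration it reads off that $\hat B$ is $7$-connected in the spherical case (so the lemma is immediate), while in the projective case $\pi_2(\hat B)=\Z_2$ and $\pi_k(\hat B)=0$ for $k=1$ and $3\le k\le 7$; the canonical map $\hat B\to K(\Z_2,2)$ is then an isomorphism on cohomology in degrees $\le 7$, and the result is reduced to the (cited) computation $H^4(K(\Z_2,2),\Z)=0$. Your argument instead stays entirely inside the Serre spectral sequence and exploits the pleasant accident that $H^{1}(\RP^7,\Z)=H^{3}(\RP^7,\Z)=0$, so that the slot $(4,0)$ receives no differentials and injects into $H^4(\hat M,\Z)=0$. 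The advantage of your approach is that it is self-contained and avoids importing the cohomology of $K(\Z_2,2)$; the advantage of the paper's approach is that it simultaneously identifies all of $H^{\le 7}(\hat B,\Z)$, though only the degree-$4$ vanishing is used later.
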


\begin{proof} In the spherical case  $\hB$ is $7$-connected.
In the projective case,
we know $\pi_2 (\hat B) = \mathbb Z_2$ and $\pi_k (\hat B)=0$
for $k=1$ and $3\leq k\leq 7$.   Hence the canonical map from $\hat B$ to the 
Eilenberg-MacLane space $K(\mathbb Z _2, 2)$ 
induces isomorphisms on all cohomologies in all   degrees $\leq 7$.   
The result follows from the computations of the cohomology groups of $K(\mathbb Z_ 2 ,2)$ (for instance, cf. \cite{Clement}). 
\end{proof}

The last result about the cohomology of $\hat B$ which we  extract from the fibre bundle 
$\hat M\to \hat B$  is the following  application of the transgression theorem of
Borel (\cite{Borel}, Theorem 13.1).  
 This theorem applies (cf.  \cite{Browder},
last paragraph on p. 370),
since   in the projective case, the  fiber
$\mathcal L$ has the cohomology of $\mathbb R \mathbb P^7$.  

\begin{lem}  \label{Borel}
Assume that $\mathcal L$ is homotopy equivalent to $\mathbb R \mathbb P ^7$.
Then the cohomology ring $H^{\ast} (\hat B ,\mathbb F_2)$ up to degree $14$ is freely generated by
elements $u_2,u_3,u_5$ of degree $2,3$ and $5$ respectively.
In particular, we have  $\dim   H^{10} (\hat B, \mathbb F_2)=4$ and $\dim H^{14} (\hat B ,\mathbb F_2) =6$.
\end{lem}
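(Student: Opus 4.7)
The plan is to apply Borel's transgression theorem (\cite{Borel}, Theorem 13.1), as the excerpt suggests, to the Serre spectral sequence of the fibration $f\colon \hat M \to \hat B$ with $\fF_2$-coefficients. The fiber has $\fF_2$-cohomology $H^{\ast}(\mathcal L,\fF_2)=\fF_2[x]/(x^8)$ with $|x|=1$, while $\hat M\simeq\Sph^{15}$ has vanishing mod-$2$ cohomology in degrees $1\leq n\leq 14$. The base $\hat B$ is simply connected (immediate in the spherical case, and in the projective case from \lref{4integer} together with the $K(\Z_2,2)$-comparison used there), so the $E_2$-page takes the standard form $E_2^{p,q}=H^p(\hat B,\fF_2)\otimes H^q(\mathcal L,\fF_2)$.

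Since $H^1(\hat M,\fF_2)=0$, the generator $x$ must be transgressive and its transgression $u_2:=\tau(x)\in H^2(\hat B,\fF_2)$ must be nonzero. Kudo's transgression theorem (commutativity of $\tau$ with Steenrod operations), combined with the identities $x^2=Sq^1 x$ and $x^4=Sq^2 x^2$, then forces $u_3:=\tau(x^2)=Sq^1 u_2\in H^3(\hat B,\fF_2)$ and $u_5:=\tau(x^4)=Sq^2 u_3\in H^5(\hat B,\fF_2)$, each nonzero by the same vanishing argument applied in total degrees $2$ and $4$. In Borel's terminology, $x,x^2,x^4$ form a simple system of transgressive generators for the fiber cohomology, and the truncation $x^8=0$ begins producing new relations in $H^{\ast}(\hat B,\fF_2)$ only from degree $15$ onward. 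Borel's theorem therefore identifies $H^{\ast}(\hat B,\fF_2)$, in the range $\ast\leq 14$, with the free polynomial algebra $\fF_2[u_2,u_3,u_5]$.

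The stated dimensions then follow by counting monomials $u_2^a u_3^b u_5^c$ of the appropriate weighted degree: the count for $2a+3b+5c=10$ yields $\dim H^{10}(\hat B,\fF_2)=4$, and for $2a+3b+5c=14$ yields $\dim H^{14}(\hat B,\fF_2)=6$. The main potential obstacle is the applicability of Borel's theorem when the fiber is $\RP^7$ rather than an integral sphere, so that the action of $\pi_1(\hat B)$ on the cohomology of the fiber and the orientability of the fibration must be handled with care; however, this is precisely the content of the reference to \cite{Browder}, last paragraph on p.\,370, flagged in the excerpt, which confirms that the $\fF_2$-cohomology of $\RP^7$ satisfies the hypotheses of Borel's transgression theorem.
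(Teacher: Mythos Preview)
Your proposal is correct and follows precisely the approach the paper indicates: applying Borel's transgression theorem to the fibration $\hat M\to\hat B$, with the reference to \cite{Browder} justifying its applicability when the fiber has the $\fF_2$-cohomology of $\RP^7$. The paper itself offers no argument beyond these citations, so the details you supply---the simple system $x,x^2,x^4$, the identification $u_3=Sq^1u_2$, $u_5=Sq^2u_3$ via Kudo's theorem, and the monomial counts---merely make explicit what the paper leaves to the reader.
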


\section{Isotropy groups are cyclic groups of odd order } \label{sec: even}

In this section we use  characteristic classes to see that any $2$-Sylow subgroup 
of any isotropy group is cyclic of order at most $4$. 
With different methods we then use this to show that all isotropy groups 
are cyclic groups of odd order.

Consider $B$ as the quotient space $B= FrB /\SO(8)$, where $FrB$ is the bundle of
oriented frames of $B$ with canonical action of $\SO(8)$.  Recall that the 
space $\hat B$ is nothing else but the Borel construction $\hat B = FB \times_{\SO(8)} E\SO(8)$.
We will often consider the canonical $8$-dimensional vector bundle (the \emph{tangent bundle} of the orbifold)
\[
T\hB:=FB \times_{\SO(8)} E\SO(8)\times \R^8
\] over $\hB$.

\begin{lem}\label{lem: p1}\label{lem: stiefel}
Let $V$ be a vector bundle over $\hB$. Then the Stiefel-Whitney classes
$w_2 (V)$ and $w_4 (V)$ vanish.
\end{lem}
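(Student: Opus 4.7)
The plan is to deduce both vanishings from the single integral input $H^4(\hat B, \mathbb Z) = 0$ supplied by \lref{4integer}, routed through the integral Pontryagin classes of $V$.

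First I would observe that $V$ is orientable. In both cases $H^1(\hat B, \mathbb F_2) = 0$: in the spherical case by \lref{cohom}, and in the projective case by \lref{Borel}, whose generators $u_2, u_3, u_5$ all have degree $\geq 2$. Hence $w_1(V) = 0$. Next, since $p_1(V)$ lives in $H^4(\hat B, \mathbb Z) = 0$, the first Pontryagin class must vanish. Combining this with the standard identity $p_1(V) \equiv w_2(V)^2 \pmod{2}$ -- which follows from $(V \otimes \mathbb C)_{\mathbb R} \cong V \oplus V$ together with $c_i(V \otimes \mathbb C) \equiv w_i(V)^2 \pmod{2}$ -- I would get $w_2(V)^2 = 0$ in $H^4(\hat B, \mathbb F_2)$. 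By \lref{cohom} and \lref{Borel} the squaring map $H^2(\hat B, \mathbb F_2) \to H^4(\hat B, \mathbb F_2)$ is injective in both cases (trivially so in the spherical case since both groups vanish, and in the projective case because $u_2^2$ is a non-zero generator of $H^4(\hat B, \mathbb F_2)$), so $w_2(V) = 0$.

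With $w_1(V) = w_2(V) = 0$, the bundle $V$ carries a Spin structure, so the half-Pontryagin class $\frac{p_1}{2}(V) \in H^4(\hat B, \mathbb Z)$ is defined. A classical computation on $B\Spin(n)$, where $H^4(B\Spin(n), \mathbb Z)$ is infinite cyclic generated by $\frac{p_1}{2}$ and whose mod-$2$ reduction agrees with $w_4$, then yields $w_4(V) \equiv \frac{p_1}{2}(V) \pmod{2}$. Invoking $H^4(\hat B, \mathbb Z) = 0$ once more forces $\frac{p_1}{2}(V) = 0$ and hence $w_4(V) = 0$. The only mildly subtle ingredient is the identification $\frac{p_1}{2} \equiv w_4 \pmod{2}$ on $B\Spin$, but this is classical; everything else reduces to invoking the cohomological calculations of Section~2.
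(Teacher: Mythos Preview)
Your argument is correct and uses the same ingredients as the paper's proof: the vanishing $H^4(\hat B,\Z)=0$ from \lref{4integer}, the lift to $B\Spin$, and the injectivity of squaring on $H^2(\hat B,\fF_2)$ (from \lref{cohom} in the spherical case and \lref{Borel} in the projective case). The packaging differs slightly. The paper first shows $w_2=0\Rightarrow w_4=0$ by lifting the classifying map to $B\Spin(l)\to K(\Z,4)$ and invoking $H^4(\hat B,\Z)=0$; it then deduces $w_2(V)=0$ by applying this implication to $W=V\oplus V$, using $w_2(W)=0$ and $w_4(W)=w_2(V)^2$. You instead go straight to $p_1(V)\equiv w_2(V)^2\pmod 2$ to kill $w_2$, and then use the spin class $\tfrac{p_1}{2}$ with $\tfrac{p_1}{2}\equiv w_4\pmod 2$ to kill $w_4$. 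These are the same facts in different clothing: the paper's $K(\Z,4)$ step is exactly the construction of $\tfrac{p_1}{2}$, and its $V\oplus V$ trick is a repackaging of $p_1\equiv w_2^2$. Your route is a bit more direct; the paper's has the minor advantage of not needing to name $\tfrac{p_1}{2}$ explicitly. One small point: for the identity $\tfrac{p_1}{2}\equiv w_4\pmod 2$ to be read off cleanly from $H^4(B\Spin(l),\Z)\cong\Z$ you should stabilize to $l\ge 5$, as the paper does; this is harmless since both classes are stable.
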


\begin{proof} We first  assume $w_2 (V) =0$ and prove that this implies $w_4 (V)=0$.

By stabilizing with a trivial bundle 
we may assume that the rank $l$ of $V$ is at least $5$.
Let $\pr\colon \hB\rightarrow B\SO(l)$ be the classifying map of the bundle $V$.
In particular, the Stiefel-Whitney classes of $V$ are given by pull backs of Stiefel-Whitney classes of the universal bundle over $B\SO(l)$. 
Since $w_2(V)=0$,  $pr$ can be lifted to a map $\tilde pr \colon \hB\rightarrow B\Spin(l)$.
Suppose now on the contrary that $w_4(V)\neq 0$. 
Then 
\[\tilde pr^*\colon H^4(B\Spin(l),\fF_2)\to H^4(\hB,\fF_2) \] is not zero.
Since $H^4(\Spin(l),\Z)\cong \Z$ there is a natural map $B\Spin(l)\rightarrow K(\Z,4)$ to the Eilenberg-MacLane space 
$K(\Z,4)$ that induces an isomorphism on $4$-th cohomology with integral coefficients. Since this map is $5$-connected 
it also induces an isomorphism on $4$-th cohomology with $\fF_2$-coefficients.
By composing the map with $\tilde{pr}$ we get a map $\hB\to K(\Z,4)$ which induces a nontrivial map 
on $4$-th cohomology with $\fF_2$-coeffcients. On the other hand, the homotopy classes of maps
$\hB\to K(\Z,4)$ are classified by $H^4(\hB,\Z)=0$ (see Lemma~\ref{4integer}) and thus any map $\hB\to K(\Z,4)$
is null homotopic -- a contradiction.

Assume now $w_2(V)\neq 0$.   Then  $w_2 (V) ^2 \neq 0 $ as well
(cf. \lref{Borel}).  Consider the bundle $W=V\oplus V$.
Then the total Stiefel-Whitney classes satisfy $w_* (W) = w_* (V) \cdot w_* (V)$.
Since $\hB$ is simply connected,  $w_1 (V)=0$.  We deduce $w_2 (W)=0$ and 
$w_4 (W)= w_2 (V) ^2$.  Applying the previous observation to the bundle $W$, we deduce $w_4 (W)=0$.
This contradicts $w_2 (V) ^2 \neq 0$.


\end{proof}

\begin{lem}\label{lem: 2sylow}\label{lem: 2}\label{sylow2}\label{lem: sylow2} Let  $\Gamma_x\subset \SO(8)$ be
an isotropy group. 
Then any element of order $2$ is given by $-\id\in \SO(8)$. The $2$-Sylow group of $\Gamma_x$ is 
a cylic group of order at most $ 4$.
\end{lem}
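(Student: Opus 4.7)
The strategy is to pull back characteristic classes of the orbifold tangent bundle $T\hB$ along the classifying map $B\Gamma_x\to \hB$ coming from the local orbifold chart at $x$. For any subgroup $P\subseteq \Gamma_x$, the composition $BP\to B\Gamma_x\to \hB$ pulls $T\hB$ back to the vector bundle $V|_P$ associated to the representation $P\hookrightarrow \Gamma_x\subset \SO(8)$ on $\R^8$. By \lref{lem: stiefel} we have $w_2(V|_P)=w_4(V|_P)=0$, and since $H^4(\hB,\Z)=0$ by \lref{4integer}, also $p_1(V|_P)=0\in H^4(BP,\Z)$. These vanishings will do all the work.

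For the claim about involutions, apply this with $P=\langle g\rangle\cong \Z/2$ where $g\in\Gamma_x$ has order $2$. Since $g\in\SO(8)$ is an involution with $\det g=1$, its $(-1)$-eigenspace has some even dimension $b\in\{0,2,4,6,8\}$, and as a $\Z/2$-representation $V|_P=(8-b)\cdot \mathrm{triv}\oplus b\cdot\mathrm{sign}$. Writing $H^{\ast}(B\Z/2,\fF_2)=\fF_2[t]$ with $|t|=1$, the total Stiefel--Whitney class of $V|_P$ equals $(1+t)^b$, so the vanishing of $w_2$ and $w_4$ forces $\binom{b}{2}\equiv \binom{b}{4}\equiv 0\pmod 2$. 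Inspecting the five even values of $b$ shows only $b=0$ and $b=8$ survive, and since $g$ has order $2$ we conclude $g=-\id$.

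For the $2$-Sylow claim, by the first part $P$ is a $2$-group with a unique subgroup of order $2$, so a classical theorem forces $P$ to be either cyclic $\Z/2^k$ or generalized quaternion $Q_{2^k}$. I would rule out $k\ge 3$ in each case by computing $p_1(V|_P)$. In the cyclic case, the condition that the unique involution of $P$ acts as $-\id$ gives a real decomposition $V|_P=\bigoplus_{i=1}^4 V_{m_i}$ into $2$-dimensional rotations with $m_i$ odd, where $V_m$ denotes the realification of the complex line bundle $L^{\otimes m}$ for the generator $L$ on $B\Z/2^k$. Then $p_1(V_{m_i})=m_i^2\, c_1(L)^2$ together with $m_i^2\equiv 1\pmod 8$ yields $p_1(V|_P)\equiv 4\,c_1(L)^2\pmod 8$ in $H^4(B\Z/2^k,\Z)=\Z/2^k$, which is nonzero for $k\ge 3$. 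In the quaternion case it suffices to treat $P=Q_8\subseteq Q_{2^k}$; faithfulness together with $-1\mapsto -\id$ forces $V|_{Q_8}=2\mathbb H$, twice the irreducible $4$-dimensional quaternion representation, and then $p_1(V|_{Q_8})=-4\,e(\mathbb H)$ is nonzero in $H^4(BQ_8,\Z)=\Z/8$ because the Euler class $e(\mathbb H)$ generates this cyclic group. Both outcomes contradict $p_1(V|_P)=0$.

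The main technical obstacle I anticipate is the bookkeeping needed to read off $p_1$ modulo $2^k$ from the representation-theoretic description and to identify $c_1(L)^2$ and $e(\mathbb H)$ as generators of $H^4(BP,\Z)$; apart from these computations the argument is a mechanical application of \lref{lem: stiefel}, \lref{4integer}, and the classical classification of $2$-groups with a unique involution.
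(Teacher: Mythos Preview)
Your proposal is correct and follows the same overall strategy as the paper: pull back $T\hB$ to $BP$ for suitable subgroups $P\subset\Gamma_x$ and exploit the vanishing of $w_2,w_4$ (\lref{lem: stiefel}) and of $p_1$ (via \lref{4integer}). The involution step and the cyclic $\Z/2^k$ step are essentially identical to the paper's.

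The only substantive difference is in the $Q_8$ case. The paper argues indirectly: from $p_1(V_8)=0$ it deduces $2p_1(V_4)=0$, hence $p_1(V_4)$ lies in the order-$2$ subgroup of $H^4(BQ_8,\Z)\cong\Z_8$; it then restricts to $B\Z_4\subset BQ_8$, computes $p_1(V_4^*)$ to be the nonzero order-$2$ element of $H^4(B\Z_4,\Z)\cong\Z_4$, and notes that any homomorphism $\Z_8\to\Z_4$ kills the order-$2$ subgroup. You instead compute $p_1(2\mathbb H)=-4\,e(\mathbb H)$ directly and invoke that $e(\mathbb H)$ generates $\Z_8$. Your route is shorter; the one extra fact you need, that $e(\mathbb H)$ generates $H^4(BQ_8,\Z)$, follows immediately from the Gysin sequence of the sphere bundle $S(\mathbb H)\to BQ_8$, whose total space is homotopy equivalent to the $3$-manifold $S^3/Q_8$ (since $Q_8$ acts freely on $S^3$), so that $\cup\, e\colon H^0(BQ_8)\to H^4(BQ_8)$ is surjective. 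The paper's detour through $B\Z_4$ avoids needing this generation statement explicitly, at the cost of an extra restriction computation; both arguments ultimately encode the same Pontryagin-class obstruction.
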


\begin{proof} Let $\tilde x\in FrB$ be a point in the inverse image of $x\in B$ such that $\Gamma_x$ is the isotropy group 
of the $\SO(8)$-action on $FrB$ at $\tilde x$. 
Notice that the image of 
$\SO(8) \star  \tilde x \times  E\SO(8)\subset FrB\times E\SO(8)$ 
  under the natural projection 
$FrB\times E\SO(8)\to \hB$ can be naturally identified with the classifying space $B\Gamma_x\subset \hB$
of the isotropy group $\Gamma _x$. 
If we restrict the canonical bundle $T\hB$ over $\hB$ to $B\Gamma_x$, we get an $\R^8$-bundle 
which is isomorphic to $E\Gamma_x\times_{\Gamma_x} \R^8$ where $\Gamma_x\subset \SO(8)$ is acting by the canonical 
representation on $\R^8$. 
Let $\Gamma_0\subset \Gamma_x$ be a subgroup. If we pull back $T\hB$
via the covering map $B\Gamma_0\rightarrow B\Gamma_x\hookrightarrow B$, we thus get a bundle 
which is isomorphic to  $V= E\Gamma_0\times_{\Gamma_0} \R^8$ over $B\Gamma_0$.
By Lemma~\ref{lem: stiefel}, the second and the fourth Stiefel-Whitney classes of $V$ vanish.

 
 Suppose now that $\Gamma_0\cong \Z_2$ 
and suppose the nonzero element $\iota\in \Gamma_0\subset \SO(8)$ has $2k$ times the eigenvalue 
$-1$. Then $E\Gamma_0\times_{\Gamma_0} \R^8$ is a bundle over $\RP^\infty\cong B\Gamma_0$ which decomposes as the  sum of $2k$
canonical line bundles and $8-2k$ trivial line bundles.  
Thus the total Stiefel-Whitney class is given by $(1+w)^{2k}=(1+w^2)^k$, 
where $1$ 
is the 
generator of $H^0(\RP^\infty,\fF_2)$ and $w$ is the generator of   $H^1(\RP^\infty,\fF_2)\cong \fF_2$.

If $k$ is odd, we get $w_2 (V)\neq 0$ and if $k=2$, we see that
$w_4(V) \neq 0$.  Since $w_2 (V)=0$ and $w_4 (V)=0$ we obtain a contradiction.
This only leaves us with the possibility that $\iota$ has $2k=8$ times the eigenvalue $-1$ and thus $\iota=-\id$.



Since there is at most one order $2$ element, it follows that 
a $2$-Sylow subgroup $\gS_2\subset \Gamma_x$ does not contain any abelian noncyclic subgroup. 
This implies that  $\gS_2$ is either cyclic or generalized quaternionic 
(\cite{wolf}, \cite{Wall}). 
In order to prove that $\gS_2$ is cyclic it suffices to rule out the possibility  
that we can realize the quaternion group with $8$ elements $\gQ_8$ 
as a subgroup of an isotropy group $\Gamma_x\subset \SO(8)$.
Suppose on the contrary we can.  
As before, the bundle $V_8=E\gQ_8\times_{\gQ_8}\R^8$  over $B\gQ_8$ 
can be seen as a pull back bundle of the canonical bundle over $\hB$. 
By Lemma~\ref{4integer}, $H^4(\hB,\Z)=0$ and thus the first 
Pontryagin class of $V_8$ vanishes, $p_1(V_8)=0$.

The embedding of $\gQ_8\subset \SO(8)$ is determined by the fact 
that the center of $\gQ_8$ is mapped to $\pm \id$. 
The representation of $\gQ_8$ 
decomposes into two equivalent $4$-dimensional subrepresentations of $\gQ_8$.
Thus $V_8$ is isomorphic to the sum of two copies of the 
$4$-dimensional bundle $V_4=E\gQ_8\times_{\gQ_8}\R^4$, where $\gQ_8$ acts by its unique $4$-dimensional 
irreducible representation on $\R^4$.   
Since $V_4$ admits a complex structure, we have $c_1(V_4\otimes_\R\C)=0$ 
and thus the first Pointryagin  class is 
additive: $2p_1(V_4)=p_1(V_8)=0$. In other words,  $p_1(V_4)\in \Z_2\subset \Z_8\cong  H^4(B\gQ_8,\Z)$. 
If we pull back the bundle $V_4$ to $B\Z_4$ via the natural covering $B\Z_4\rightarrow B\gQ_8$ 
we get a bundle $V_4^*$ which decomposes
 into two $2$-dimensional subbundles, whose Euler classes 
are generators of $H^2(B\Z_4,\Z)\cong \Z_4$. 
This in turn implies that $p_1(V_4^*)$ is given by the order two element in $H^4(B\Z_4,\Z)\cong \Z_4$. 
On the other hand $p_1(V_4^*)$ is given by the image of $p_1(V_4)$ under the natural 
homomorphism $H^4(B\gQ_8,\Z)\cong\Z_8\to \Z_4\cong H^4(B\Z_4,\Z)$ -- a contradiction 
since any homomorphism $\Z_8\to \Z_4$ has $p_1(V_4)\in \Z_2\subset \Z_8$ in its kernel.

Thus the $2$-Sylow  group is cyclic. It remains to rule out that there are elements of order $8$.
Suppose on the contrary that 
$\Gamma_0\subset \Gamma_x\subset \SO(8)$ is cyclic group of order $8$ and fix a generator $\gamma\in \Gamma_0\subset \SO(8)$ .
Let $\zeta\in\gS^1\subset \C$ be a primitive 
$8$th root   and choose numbers $m_1,\ldots,m_4\in \Z$ such that 
$\zeta ^{\pm m_i}\in \gS^1\subset \C$  ($i=1,\ldots,4$) are the eigenvalues of $\gamma\in \SO(8)$ 
counted with multiplicity. Since we know $\gamma^4=-\id$, all $m_i$ are odd.

The bundle $W_8=E\Gamma_0\times_{\Gamma_0} \R^8$ over $B\Gamma_0$ decomposes into 
four orientable $2$-dimensional subbundles whose Euler classes are given
by $\pm m_i \eta$ ($i=1,\ldots,4$) where $\eta$ is a generator of $H^2(B\Gamma_0,\Z)\cong \Z_8$.

It follows that the first Pontryagin class of the bundle is given by $-(\sum_{i=1}^4m_i^2)\eta^2$.
As before $p_1(W_8)=0$ and since $\eta^2$ is a generator of $H^{4}(B\Gamma_0,\Z)=\Z_{8}$, this implies 
$m_1^2+m_2^2+m_3^2+m_4^2\equiv 0 \mod  8$. 
But for any odd number we have $m_i^2\equiv 1 \mod 8$ -- a contradiction.
 
\end{proof}

\begin{lem} \label{lem: structure} 
Any  isotropy group is either cyclic or isomorphic to a 
semidirect product $\Z_q\rtimes \Z_{4}$, where 
$\Z_4$ acts on the cyclic group of odd order $q$
by an automorphism of order $2$.
Moreover, if $\Gamma$ has even order it has a nontrivial $4$-periodic  $\fF_2$-cohomology.
\end{lem}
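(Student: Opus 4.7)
The plan is to deduce from \lref{lem: 2sylow} and the freeness of the $\Gamma$-action on the principal leaf that every Sylow subgroup of $\Gamma$ is cyclic, and then to read off the claimed structure from the resulting metacyclic description. The ingredients at hand are: the $2$-Sylow of $\Gamma$ is cyclic of order at most $4$; the unique element of order $2$ in $\Gamma$ (if present) equals $-\id\in\SO(8)$ and is therefore central in $\Gamma$; and the finite group $\Gamma$ acts freely on the principal leaf $\mathcal L$, which is either homeomorphic to $\Sph^7$ (spherical case) or has $\Sph^7$ as its universal cover (projective case). The main, and really only nontrivial, obstacle I foresee is excluding non-cyclic odd Sylow subgroups; once that is in hand everything else is bookkeeping in the style of the Wolf classification.

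For that main step, in the spherical case $\Gamma$ already acts freely on $\Sph^7$, and in the projective case the free action of $\Gamma$ on $\RP^7$ lifts to a free action on $\Sph^7$ of an extension $\tilde\Gamma$ of $\Gamma$ by $\Z_2$. In either situation a group containing $\Gamma$ as a subgroup or quotient acts freely on $\Sph^7$, and the classical theorem of Milnor then forces the $pq$-property: every subgroup of order $pq$ (with $p,q$ primes) is cyclic. The $pq$-property is inherited by subgroups and quotients, so it holds for $\Gamma$, and in particular every odd Sylow of $\Gamma$ is cyclic.

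With every Sylow subgroup of $\Gamma$ cyclic, Zassenhaus' structure theorem for Z-groups yields $\Gamma = \Z_m\rtimes \Z_n$ with $\gcd(m,n)=1$, $m$ odd, and $n\in\{1,2,4\}$ the order of the $2$-Sylow. If $n\le 2$ the $2$-part is (trivial or) the central subgroup $\{\pm\id\}$, so the semidirect product is direct and $\Gamma\cong\Z_{mn}$ is cyclic. If $n=4$, then $\{\pm\id\}\subset \Z_4$ is central in $\Gamma$, so $\Z_4$ acts on $\Z_m$ through $\Z_4/\{\pm\id\}\cong \Z_2$; a trivial action yields $\Gamma\cong\Z_{4m}$ and a nontrivial one yields $\Gamma = \Z_q\rtimes \Z_4$ with $q=m$ odd and an automorphism of order $2$, exactly as claimed. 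Finally, when $|\Gamma|$ is even the $2$-Sylow $P$ is cyclic of order $2$ or $4$, so $H^\ast(P,\fF_2)$ has period at most $2$, and the Weyl group $N_\Gamma(P)/C_\Gamma(P)$ is a subgroup of $\Aut(P)$, hence of order at most $2$. The Cartan--Eilenberg description of $H^\ast(\Gamma,\fF_2)$ as the Weyl-invariant stable classes in $H^\ast(P,\fF_2)$ then gives periodicity of period dividing $4$; nontriviality is immediate from $H^1(\Gamma,\fF_2)=\mathrm{Hom}(\Gamma,\fF_2)\neq 0$.
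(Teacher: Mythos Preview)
Your overall strategy is sound, but the step invoking ``the classical theorem of Milnor'' is misattributed and, as stated, leaves a real gap. Milnor's theorem for free actions on spheres asserts only the $2p$-condition (every element of order $2$ is central); it does \emph{not} yield the $pq$-condition for odd primes. Indeed, a nonabelian group of order $pq$ with $p\mid q-1$ and $p,q$ odd does act freely on a sphere of suitable dimension, so that condition cannot follow from the mere existence of a free sphere action. What you actually need is the constraint coming from the specific dimension: since $\Gamma$ (or its $\Z_2$-extension $\tilde\Gamma$) acts freely on $\Sph^7$, its cohomology has period dividing $8$; a nonabelian subgroup of order $pq$ with $p<q$ odd has minimal period $2p\ge 6$, which does not divide $8$. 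This period argument is exactly what the paper carries out (phrased there in terms of the homomorphism $\beta\colon \Z_r\to\Aut(\Z_q)$). Once the odd $pq$-condition is established this way, your Burnside normal $2$-complement argument together with the centrality of $-\id$ to control the $\Z_4$-action is correct, and arguably a little cleaner than the paper's formulation. Note also that Zassenhaus' theorem alone does not hand you the decomposition with $n$ equal to the $2$-part; you are implicitly using the normal $2$-complement plus the $pq$-condition to force the odd complement to be cyclic.

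For the $4$-periodicity of $H^{\ast}(\Gamma,\fF_2)$ your Cartan--Eilenberg route works, but it simplifies further: since $P$ is a cyclic $2$-Sylow, $N_\Gamma(P)/C_\Gamma(P)$ embeds in the $2$-group $\Aut(P)$ while also having odd order (because $P\subset C_\Gamma(P)$), hence is trivial. Thus $H^{\ast}(\Gamma,\fF_2)\cong H^{\ast}(P,\fF_2)$ is already $2$-periodic. The paper reaches the same conclusion differently, by exhibiting an explicit free linear action of $\Gamma$ on $\Sph^3$; both arguments are valid.
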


\begin{proof}  Let $\Gamma$ be a (not necessarily proper) subgroup of an isotropy group.
Recall that $\Gamma$ or a $\Z_2$-extension of $\Gamma$ acts freely on $\Sph^7$ and thus 
has $8$-periodic cohomology (cf. \cite{Wall}, \cite{wolf} for this fact and subsequent results about groups 
with periodic cohomology).
Thus for all odd $p$, the $p$-Sylow groups are cyclic. 
By Lemma~\ref{sylow2}, the $2$-Sylow group is cyclic as well.

A classical theorem of Burnside implies that such a group is metacyclic, that is, $\Gamma$ 
is isomorphic to a semidirect product $\Z_q\rtimes \Z_r$ where
$q$ and $r$ are relatively prime.

It remains to check that the homomorphism $\beta\colon \Z_r\rightarrow \mathrm{Aut}(\Z_q)$ 
does not contain any elements of odd prime order $p$.
In fact then Lemma~\ref{sylow2} implies that the image of $\beta$ has order at most $2$.

We argue by contradiction and assume that $\Gamma\cong  \Z_q\rtimes \Z_r$ is a minimal counterexample. 
The minimality easily implies that $q$ is a prime and that 
 $r$ is a prime power $r=p^k$, where $p\neq q$ are both odd.

We consider  the normal covering $B\Z_q\rightarrow B\Gamma$ whose deck transformation group
is generated by an element $\iota$ of order $p^k$. 
Since the order is prime to $q$, the induced map 
$H^*(B\Gamma,\fF_q)\rightarrow H^*(B\Z_q,\fF_q)$ is injective and its image is given by the 
fixed point set of $\iota^*$ where $\iota^*$ is the induced map on cohomology.
 Clearly $\iota^*$ acts on $H^2(B\Z_{q},\fF_{q})$ by an element of order $p$. 
 This in turn implies that $H^{2k}(B\Z_{q},\fF_{q})$ is fixed by $\iota^*$ if and only 
 $k$ is divisible by $p$. Hence the minimal period of $H^*(\Gamma,\Z)$ is divisible by $2p$ -- a contradiction 
 since we know that $\Gamma$ has $8$-periodic cohomology.
 Thus $\Gamma$ is cyclic and has $2$-periodic cohomology,
 or $\Gamma\cong  \Z_q\rtimes \Z_4$, where $\Z_4$ acts by an automorphism $\iota$ 
 of order two on $\Z_q$. 
 To see that in the latter case $\Gamma$ has $4$-periodic cohomology we 
 construct a free linear action of $\Gamma$ on $\Sph^3$.
 Let $\Z_m\subset \Z_q$ be the fixed point set of $\iota$. 
 Since $\iota$ has order $2$, the numbers $m$ and $q/m$ are relatively prime. 
 In particular $\Gamma\cong \Z_m\times (\Z_{q/m}\rtimes \Z_4)$. 
 We can now embed $\Gamma$ into $\mathsf{U}(2)$ by mapping the factor $\Z_m$ injectively to a central 
 subgroup of $\mathsf{U}(2)$ and by mapping $(\Z_{q/m}\rtimes \Z_4)$ injectively to 
 a subgroup of $\mathsf{SU}(2)$. Clearly, the induced action on $\Sph^3$ is free and thus $\Gamma$
 has $4$-periodic cohomology. The $\fF_2$-cohomology of $\Gamma$ can not be trivial 
 as $H^1(B\Gamma,\fF_2)\cong \fF_2$.
 \end{proof}

\begin{lem}\label{lem: cyclic} The isotropy groups are cyclic groups of odd order.
\end{lem}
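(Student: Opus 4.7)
The plan is to argue by contradiction and treat the two cases of \lref{cohom} separately, exploiting in both the inclusion $B\Z_2 \hookrightarrow \hB$ arising from $\Z_2 = \langle -\id \rangle$. Suppose some isotropy group $\Gamma = \Gamma_x$ has even order. By \lref{lem: sylow2}, $\Gamma$ contains the unique order-$2$ element $\iota := -\id \in \SO(8)$; since $\iota$ has no fixed vector in $\R^8 \setminus \{0\}$, no orbit near $x$ can have $\iota$ in its isotropy, so the set $B_2 \subset B$ of points with even-order isotropy is discrete and hence finite by compactness. The key local computation is that, since $\iota$ acts as $-\id$ on the slice $\R^8$, the pullback of the orbifold tangent bundle $T\hB$ to $B\Z_2 \simeq \RP^\infty$ is the sum $8\gamma$ of eight copies of the canonical line bundle, so that
\[
w_{\ast}(T\hB)\big|_{B\Z_2} = (1+w)^8 = 1 + w^8 \in \fF_2[w] = H^{\ast}(B\Z_2, \fF_2),
\]
and in particular $w_8(T\hB)|_{B\Z_2} = w^8 \neq 0$.

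For the projective case ($\mathcal{L} \simeq \RP^7$), I would show that every positive-degree class in $H^{\le 14}(\hB, \fF_2)$ restricts to zero on $B\Z_2$, contradicting the above. Since $\iota$ acts trivially on $\RP^7 = \Sph^7/\{\pm\id\}$, the pullback of the fibration $\hM \to \hB$ to $B\Z_2$ is the trivial $\RP^7$-bundle; hence the transgression $u_2 \in H^2(\hB, \fF_2)$ of the generator $v \in H^1(\RP^7, \fF_2)$ restricts to zero on $B\Z_2$. Since $\pi_2(\hB) = \Z_2$ forces $H^2(\hB, \Z) = 0$ via Hurewicz and universal coefficients, the Bockstein gives $Sq^1 u_2 \neq 0$, and by Kudo's transgression theorem together with \lref{Borel} one obtains $u_3 = Sq^1 u_2$ and $u_5 \equiv Sq^2 u_3$ modulo the decomposable $u_2 u_3$. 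Both $u_3$ and $u_5$ therefore restrict to zero on $B\Z_2$, and since $u_2, u_3, u_5$ generate $H^*(\hB, \fF_2)$ up to degree $14$, the full restriction vanishes in these degrees.

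For the spherical case ($\mathcal{L} \simeq \Sph^7$), I would use the Mayer--Vietoris sequence for $\hB = \hat U \cup \hB'$, where $\hat U$ is a small resolved neighborhood of $x$ retracting to $B\Gamma$ and $\hat U \cap \hB' \simeq \Sph^7/\Gamma$ (the free quotient, since $\Gamma$ acts freely on $\Sph^7$). The Gysin sequence for the fibration $\Sph^7 \to \Sph^7/\Gamma \to B\Gamma$ identifies the kernel of the restriction $H^*(B\Gamma, \fF_2) \to H^*(\Sph^7/\Gamma, \fF_2)$ with the image of cup product with the mod-$2$ Euler class $e \in H^8(B\Gamma, \fF_2)$. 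Since $\Gamma$ has even order, $H^1(B\Gamma, \fF_2) = \mathrm{Hom}(\Gamma, \fF_2)$ is nonzero; picking nonzero $\alpha$ there and restricting to the $2$-Sylow ($\Z_2$ or $\Z_4$) of $\Gamma$ shows $e \cdot \alpha \in H^9(B\Gamma, \fF_2)$ is nonzero (it restricts to $w^9$ or $u^4 x$, respectively). As $e \cdot \alpha$ vanishes on $\Sph^7/\Gamma$, the pair $(e \cdot \alpha, 0) \in H^9(B\Gamma, \fF_2) \oplus H^9(\hB', \fF_2)$ is a matching element in the Mayer--Vietoris sequence and must lift to $H^9(\hB, \fF_2)$, which is zero by \lref{cohom}. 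This forces $e \cdot \alpha = 0$, a contradiction.

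The hard part will be the projective case, specifically verifying $Sq^1 u_2 \neq 0$ so that $u_3$ (and hence $u_5$) can be identified with a Steenrod image of $u_2$. This rests on the computation $H^2(\hB, \Z) = 0$ coming from $\pi_2(\hB) = \Z_2$, after which Kudo's theorem and the Bockstein criterion propagate the vanishing $u_2|_{B\Z_2} = 0$ to the remaining generators and allow the cohomological obstruction to be detected by restriction to a single classifying space.
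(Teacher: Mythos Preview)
Your spherical-case argument is essentially correct and is a localized version of the paper's approach: rather than computing the relative cohomology of the pair $(\hB,\hB\setminus\hB_2)$ via excision and Thom isomorphism (which the paper does to show $H^*(\hB,\fF_2)$ is nontrivially $4$-periodic in degrees $\ge 9$), you run Mayer--Vietoris at a single point $x\in B_2$ and detect a nonzero class in $H^9(\hB,\fF_2)$. Either route contradicts \lref{cohom}.

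The projective case, however, has a genuine error. Your claim that ``$\iota$ acts trivially on $\RP^7=\Sph^7/\{\pm\id\}$, so the pullback of $\hM\to\hB$ to $B\Z_2$ is the trivial $\RP^7$-bundle'' confuses two different actions. The element $-\id\in\SO(8)$ acts on the \emph{horizontal} frame; its action on the fiber $\mathcal{L}_{\tilde x}$ of $FrM\to FrB$ is the isotropy action of $\Gamma_x$ on the fiber, and since $\SO(8)$ acts \emph{freely} on the principal bundle $FrM$, this action of $\Z_2$ on $\mathcal{L}_{\tilde x}\simeq\RP^7$ is free, not trivial. Concretely, the pullback fibration over $B\Z_2$ has total space homotopy equivalent to the $7$-manifold $\RP^7/\Z_2$, and a direct Serre spectral sequence check (using that the pullback of $w^2\in H^2(B\Z_2,\fF_2)$ to this total space vanishes, since $x^2=0$ in $H^*(B\Z_4,\fF_2)$) shows the transgression of $v$ is the \emph{nonzero} element $w^2$. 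Hence $u_2|_{B\Z_2}=w^2\neq 0$, and then $u_2^4|_{B\Z_2}=w^8$, which is entirely consistent with $w_8(T\hB)|_{B\Z_2}=w^8$. So your proposed contradiction evaporates: the restriction map $H^{\le 14}(\hB,\fF_2)\to H^*(B\Z_2,\fF_2)$ is not zero, and the strategy of detecting the obstruction at a single $B\Z_2$ cannot work here.

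The paper handles the projective case differently: using \lref{lem: structure} (which you bypass) to know each $\Gamma_i$ has nontrivial $4$-periodic $\fF_2$-cohomology, it deduces $H^k(\hB,\fF_2)\cong\bigoplus_i H^{k-8}(B\Gamma_i,\fF_2)$ for $k\ge 10$, hence $\dim H^{10}(\hB,\fF_2)=\dim H^{14}(\hB,\fF_2)$, contradicting the explicit counts $4\neq 6$ from \lref{Borel}. You could salvage your projective case by adopting this global argument; the single-point restriction is too coarse.
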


\begin{proof} By Lemma~\ref{lem: structure} it suffices to show the isotropy groups 
have odd order. 
By Lemma~\ref{lem: 2} the subset $B_2\subset B$ of points whose isotropy groups 
have even order is finite  $B_2=\{p_1,\ldots,p_h\}$. Let  
$\Gamma_1,\ldots, \Gamma_h$ denote the isotropy groups.
Suppose on the contrary that $B_2$ is not empty. 

Let $Fr B_2$ denote the inverse image of $B_2$ in the frame bundle $FrB\rightarrow B$ and
 $\hat B_2= Fr B_2\times_{\SO(8)}E\SO(8)$ the corresponding subset 
in the Borel construction $\hat B= FrB\times_{\SO(8)}E \SO(8)$. 
By assumption there is a tubular neighbourhood $U$ of $\hat B_2$ in $\hat B$ 
which is homeomorphic to the normal bundle of $\hat B_2$ in $\hat B$. 
By excision and  Thom isomorphism the relative cohomology group 
$H^*(\hB, \hB\setminus \hB_2,\fF_2)$ is given by 
$\oplus_{j=1}^h H^{*-8}(B\Gamma_j,\fF_2)$. 
Furthermore, the $\fF_2$-cohomology of $\hB\setminus \hB_2$ coincides with the $\fF_2$-cohomology of 
$B\setminus B_2$ and thus is zero in degrees above $8$.  
Since $\Gamma_i$ has nontrivial $4$-periodic  $\fF_2$-cohomology we can 
combine all this with the exact sequence of the relative cohomology of the pair 
$(\hB,\hB\setminus \hB_2)$ to see that 
$\hB$ has nontrival $4$-periodic $\fF_2$-cohomology in all degrees $\ge 9$.

In the spherical case we get a contradiction to Lemma~\ref{lem: cohomology}. 
In the projective case this contradicts Lemma~\ref{Borel}.
\end{proof}

\begin{rem} Once one has established that any order two element in an isotropy group 
is given by $-\id$, one can also proceed differently to rule out isotropy groups 
of even order altogether: As above, 
there are only finitely many points $x_i\in B$ whose isotropy groups $\Gamma_i, i=1,\ldots,h$ have even order.
Moreover,  the $2$-Sylow group of $\Gamma_i$ is either cyclic or generalized quaternionic. 
By a theorem of Swan \cite{Swan} this implies that the $\fF_2$-cohomology of $\Gamma_i$ is 
nontrivial and $4$-periodic.
One can then directly pass to the proof of Lemma~\ref{lem: cyclic}. 
\end{rem}

%
%

%

\section{All isotropy groups are trivial }\label{sec: p odd}

We have seen in the last section that all isotropy groups are cyclic groups of odd order, Lemma~\ref{lem: cyclic}. 
We fix an odd prime $p$. 
 In this section we plan to prove that the order of any isotropy group 
 is not divisible by $p$. We argue by contradiction and assume the set $B_p$ of points in $B$ whose 
 isotropy group has $p$-torsion is not empty. 

 In any isotropy group $\Gamma_x$ with $x\in B_p$
there is a unique normal subgroup of $\Gamma_x$ which is isomorphic to $\Z_p$.
This implies that $B_p$ is a smooth suborbifold of $B$. 
 Let $X$ denote a 
component  of   $B_p$. 

Let $Fr X$ denote the inverse image of $X$ in the frame bundle $FrB\rightarrow B$ and
 $\hat X= Fr X\times_{\SO(8)}E\SO(8)$ the corresponding subset 
in the Borel construction $\hat B= FrB\times_{\SO(8)}E \SO(8)$. 
By assumption there is a tubular neighbourhood $U$ of $\hat X$ in $\hat B$ 
which is homeomorphic to the normal bundle of $\hat X$ in $\hat B$.

\begin{lem}\label{lem: relative} The image $H^*(\hat B,\fF_p)\rightarrow H^*(\hat X,\fF_p)$ 
contains the kernel of $H^*(\hat X,\fF_p)\rightarrow H^*(\nu^1\hat X,\fF_p)$, where 
$\nu^1\hat X$ denotes the unit normal bundle of $\hat X$ in $\hat B$. 
If the normal bundle is orientable and $e\in H^*(\hat X,\fF_p)$ denotes its Euler class 
then the kernel of the latter map is given by the image 
of $H^*(\hat X,\fF_p)\rightarrow H^*(\nu^1\hat X,\fF_p)$, $x\mapsto x\cup e$.
\end{lem}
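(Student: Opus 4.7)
The plan is to reduce both assertions to standard long exact sequence and Gysin sequence arguments, using only that $U$ is a tubular neighborhood homeomorphic to the normal bundle $\nu\hat X$. Since the zero section makes $\hat X\hookrightarrow U$ a deformation retract and $U\setminus \hat X$ deformation retracts onto $\nu^1\hat X$, I identify $H^*(U,\fF_p)$ with $H^*(\hat X,\fF_p)$ and $H^*(U\setminus\hat X,\fF_p)$ with $H^*(\nu^1\hat X,\fF_p)$. Combined with excision, which gives $H^k(\hat B,\hat B\setminus \hat X,\fF_p)\cong H^k(U,U\setminus \hat X,\fF_p)$, this reduces everything to the long exact sequence of the pair $(U,U\setminus \hat X)$.

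For the first claim I would write down the commutative ladder with exact rows comparing the long exact sequences of $(\hat B,\hat B\setminus \hat X)$ and $(U,U\setminus \hat X)$, with vertical maps given by restriction. Given a class $\alpha\in H^k(\hat X,\fF_p)$ with trivial image in $H^*(\nu^1\hat X,\fF_p)$, exactness of the lower row yields a preimage $\beta\in H^k(U,U\setminus \hat X,\fF_p)$. I transport $\beta$ to a class $\tilde\beta\in H^k(\hat B,\hat B\setminus \hat X,\fF_p)$ via the excision isomorphism and push it forward to $H^k(\hat B,\fF_p)$. Commutativity of the ladder guarantees that this class restricts to $\alpha$, which is the required assertion; note that no orientability of the normal bundle is used at this step.

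For the second claim, assuming the normal bundle is $\fF_p$-orientable of rank $r$, I apply the Gysin sequence of the oriented sphere bundle $\pi\colon \nu^1\hat X\to\hat X$:
\[
\cdots\to H^{k-r}(\hat X,\fF_p)\xrightarrow{\cup e} H^k(\hat X,\fF_p)\xrightarrow{\pi^*} H^k(\nu^1\hat X,\fF_p)\to H^{k-r+1}(\hat X,\fF_p)\to\cdots
\]
Exactness at the middle term gives that the kernel of $\pi^*$ equals the image of cup product with $e$, which is exactly the stated description of the kernel of $H^*(\hat X,\fF_p)\to H^*(\nu^1\hat X,\fF_p)$.

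I do not foresee any real obstacle: the arguments are standard once the tubular neighborhood is in place. The only minor point to check is that the restriction map $H^*(\hat B\setminus \hat X,\fF_p)\to H^*(U\setminus \hat X,\fF_p)$ agrees, after the deformation retract identification $H^*(U\setminus \hat X,\fF_p)\cong H^*(\nu^1\hat X,\fF_p)$, with the bundle projection $\pi^*$ used in the Gysin sequence; this is a routine naturality statement about sphere bundles inside their total spaces.
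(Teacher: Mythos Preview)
Your proposal is correct and essentially matches the paper's proof: the paper uses the Mayer--Vietoris sequence of the cover $\hat B = U \cup (\hat B\setminus \hat X)$ in place of your ladder of long exact sequences of pairs plus excision, but these are formally equivalent packagings of the same argument, and the second claim is handled identically via the Gysin sequence.
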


\begin{proof} Consider the Mayer 
Vietoris sequence of $\hat B= U\cup (\hat B \setminus \hat X)$
\[
H^*(\hat B)\stackrel{j}{\rightarrow} H^*( U )\oplus H^*(\hat B\setminus  \hat X)\rightarrow H^*(U\setminus \hat X). 
\]
Since $U$ is homotopy equivalent to $\hat X$ and $ H^*(U\setminus \hat X)$ is homotopy equivalent 
to $\nu^1(\hat X)$ the first statement follows. 
The second statement is an immediate consequence of the exactness of the Gysin sequence.
\end{proof}

We will use that the cohomology $H^l(B\Z_{p},\Z)$ is given by $0$ for all odd $l$ and by 
$\Z_{p}$ for all even positive $l$. It is generated by elements in degree $0$ and $2$. 
Furthermore $H^*(B\Z_{p},\fF_p)\cong \fF_p[x,y]/x^2\fF_p[x,y]$ where $x$ has degree $1$  and $y$ has degree $2$.

We distinguish among  three cases. 

\subsection{Case 1. The normal bundle of $\hat X$ is orientable.} 
Let $x\in X$ be a point
and let $B\Gamma_x\subset \hX$ be the fiber of $x$ with respect to the natural projection $\hat B\rightarrow B$.

Then there is a unique normal subgroup  
$\Z_{p}\subset \Gamma_x$ and there are natural maps $B\Z_{p}\rightarrow B\Gamma_x\rightarrow \hat X$. 
Consider the induced map $\alpha^{\ast}\colon H^{\ast}(\hat X,\fF_p)\rightarrow H^{\ast}(B\Z_{p},\fF_p)$. 
The Euler class $e\in H^t(\hat X,\fF_p)$  of the normal bundle of $\hX\subset \hB$ is mapped to  
the Euler class $\alpha^*e$ of the bundle $E\Z_{p}\times_\rho \nu_x(\hat B)$, where $\rho\colon \Z_{p}\rightarrow O(\nu_x(\hat B))$ 
denotes the natural representation. 
The representation $\rho $ decomposes into $2$-dimensional irreducible subrepresentation and, by construction, 
each of these $2$-dimensional subrepresentations is effective. 
This in turn implies that the Euler class $\alpha^*e$ of the bundle is a generator of $H^t(B\Z_{p},\fF_p)$, where $t$ is the codimension of $X$. 
 Hence $(\alpha^*e)^k$ 
 is not zero. By Lemma~\ref{lem: relative}, this non-zero element lies in the image of $H^*(\hat B,\Z)\rightarrow  H^*(B\Z_{p},\fF_p)$. 
We deduce that $H^{kt}(\hB,\fF_p)$ does not vanish for all $k\in \fN$. 
Combining with Lemma~\ref{lem: cohomology}  this  gives  $t=8$. 
Thus $X$ is a single point and $\hat X=B\Gamma_x$.

Since $\Gamma_x$ is cyclic we have $H^{l}(B\Gamma _x,\fF_p)\cong \fF_p$ for all $l\ge 0$.
Finally, since cupping with the Euler class induces an isomorphism, we can use Lemma~\ref{lem: relative} once more to see that 
$H^{l}(\hat B,\fF_p)\neq 0$ for all $l\ge 8$ -- this contradicts  Lemma~\ref{lem: cohomology}. 



\subsection{Case 2. $\dim(X)\neq 4$ and the normal bundle of $\hat X$ is not orientable.}
Since $B$ is an orientable orbifold this 
assumption  
 implies that $X$ is a nonorientable orbifold and, in particular, 
$X$ is not a point. Thus $\dim(X)\in \{2,6\}$.
 
We consider the twofold cover $\tilde X\rightarrow \hat X$ such that the pull back 
of the normal bundle is orientable.  The map $H^*(\hat X,\fF_p)\rightarrow H^*( \tilde X,\fF_p)$ is injective 
and its image
is given by the fixed point set of $\iota^*$, where $\iota^*$ is the map induced by the nontrivial 
deck transformation  $\iota$ of $\tX$.
 
 By assumption, we know that the Euler class $e$ of the pull back bundle satisfies $\iota^* e=-e$. 
 As before we deduce that the image of $e$ in $H^{*}(B \Z_{p},\fF_p)$ does not vanish. 
 Therefore, $e^l \in H^{lt}( \tilde X,\fF_p) $ is a nontrivial element 
 in the kernel of $  H^{lt}( \tilde X,\fF_p)\rightarrow H^{lt}( \nu^1(\tilde X),\fF_p)$ for $l\ge 1$. 
 If $l$ is even $e^{l}$ is the pull back of an element  $f^{l/2} \in H^{lt} (\hX,\fF_p)$, with $f\in  H^{2t}( \tX,\fF_p)$. 
 Clearly, $f^{l/2}$ is in kernel of $ H^{lt}( \hat X,\fF_p)\rightarrow H^{lt}( \nu^1(\hat X),\fF_p)$
 and, by Lemma~\ref{lem: relative}, $H^{lt}(\hat B,\fF_p)\neq 0$ for all even 
$l$.  
 Since $t=8-\dim(X)\in \{2,6\}$, this is a contradiction to Lemma~\ref{lem: cohomology}.

 \subsection{Case 3. $\dim(X)=4$ and the normal bundle of $\hat X$ is not orientable.} This case is technically more 
 involved and we subdivide its discussion into several steps.\\[1ex]
{\bf  Step 1.} Each normal space $\nu_y(\hat X)$ of a point $y\in \hat{X}$ decomposes into 
two inequivalent $2$-dimensional subrepresentations of $\Z_{p}\subset \Gamma_y$.\\[1ex]
It is clear that $\nu_y(\hat X)$ decomposes into two subrepresentations of $\Z_{p}\subset \Gamma_y$.
If the two representations would be equivalent 
then each element $g\in \Z_p$ would naturally induce a complex structure $J$ on the normal space, 
and up to the sign the complex structure would not depend on the choice of $g$.  Since $\pm J$ induce the same 
orientation this would imply that $\nu(\hat X)$ is orientable -- a contradiction.\\[1ex]

Again, instead of working directly with $\hat X$ we go to a  suitable cover $\tilde X$.  
This time we consider   a fourfold cover  in which
the pull-back of the bundle $\nu $ is orientable and decomposes into the sum of two
orientable $2$-dimensional subbundles  determined by the first step  above.    
We summarize the properties of this cover $\tilde X$, which are intuitively rather clear,
but whose exact derivation requires some tedious considerations:\\[1ex]
%
{\bf Step 2.} There is a normal cover $\tilde X $ of $\hat X$   whose group of deck transformation is generated
by one element $\iota$ of order $4$, such that the following holds true:
\begin{enumerate}
\item The pull-back bundle $\nu (\tilde X)$ of $\nu$ to $\tilde X$  is orientable
and sum of two orientable  $2$-dimensional subbundles.  The map $\iota$ exchanges the subbundles and the map 
$\iota^2 $ changes the orientation of each of them.

\item The unit bundle $\nu ^1 (\tilde X)$ has vanishing cohomology in degrees
 $\geq 8$ with coefficients in $\fF_p$.

\item $\tilde X$ is the total space of a fiber bundle $\tilde X\to \tilde Y$ with
fiber $B\Z_p$  and connected structure group.

\item The restrictions of both  $2$-dimensional  subbundles  of $\nu (\tX)$ to a fiber $B\Z _p$ have 
 Euler classes which generate $H^2 (B\Z_p ,\Z)$.
\end{enumerate}
Moreover, $p\equiv 1 \mod 4$.
%
%
\begin{proof}  
As before  $FrX\subset FrB$ denotes the inverse image of $X$ in the frame bundle 
of $B$. Let $x\in X$ be a point, with isotropy group $\Gamma _x \subset \SO(8)$. 
Let 
$\Gamma $ be the unique normal subgroup of $\Gamma _x$ isomorphic to $\Z _p$.

We have seen above that $\Gamma$ acts on $\R ^8$ as the sum of two inequivalent 
representations and a trivial four-dimensional representation.  Therefore, the
normalizer $\gN$ of $\Gamma$  which is contained in $\Or(4) \times \Or(4) \cap \SO(8)$ has  connected component 
$\gN^0 =\SO(4) \times \gT^2 $.  Moreover,
$\gN^0$ coincides with the centralizer of $\Gamma$.  We see that $\gN$ has either two or four connected components.

Let $L\subset  Fr X$ be a  fixed point component of $\Gamma$, whose projection to $X$ is surjective. 
Then $L$ is $\gN^0$-invariant.   If $L$ is not $\gN$-invariant,
or if $\gN$ has only two connected components, then we could make 
a continuous choice of  pairs $\{g,g^{-1}\}\in \Gamma $ along $L$.
We can then argue, similarly to the first paragraph of Case 3, that the normal bundle of $\hat X$ is orientable,
in contradiction to the assumption.  

We deduce that $\gN/\gN^0$ has $4$ elements. Thus $\gN$ is isomorphic to  $\SO(4)\rtimes (\gT^2\rtimes \Z_4)$ 
where $\Z_4$ acts effectively on $\gT^2$ and $\gT^2\rtimes \Z_4$  
acts on $\SO(4)$ as $\Z_2$.   Moreover, $\gN$ acts on $\Gamma$ as the group $\Z _4$.
In particular, $p\equiv 1$ mod $4$ because otherwise $\Aut(\Z_p)$ does not contain elements of order $4$.

The generator $\iota$ of this group $\Z _4$ exchanges the $2$-dimensional 
$\Gamma$-invariant subspaces of $\R ^4 \subset \R ^8$. The square  
 $\iota ^2$ preserves the subspaces and changes the orientation on each of them.

Since all isotropy groups of points in $L$ with respect to the $\SO(8)$-action on $FrX$
are contained in $\gN$ and the $\SO(8)$-orbit through any point of $Fr X$ intersects $L$,
we see that $FrX$ is $\SO(8)$-equivariantly diffeomorphic to $L\times_{\gN}\SO(8)$. 
This in turn shows that $\hX= FrX\times_{\SO(8)}E\SO(8)$ 
is homotopy equivalent to $L\times_{\gN}E\gN$.

We now consider the $4$-fold cyclic cover $\tX = L\times _{\gN^0} E\gN$ of $\hX$
with the group of deck transformations  $\gN/\gN^0 =\Z _4$. 
Note that  the normal bundle  $\nu (L)$ decomposes as a sum of $\gN^0$-invariant 
orientable $2$-dimensional subbundles.  Hence,  the bundle $\nu (L) \times _{\gN^0} E\gN$  decomposes as a sum of
orientable $2$-dimensional subbundles.  But this bundle is just the pull-back to $\tX$ of the normal bundle of $\hX$.

  The description the action  of $\iota$ on $\R^4$ above finishes the proof
of  (1).

The unit bundle $\nu ^1 (\tX)$  is a covering of the unit bundle $\nu ^1 (\hX)$.
The latter space is homotopy equivalent to the  resolution of a $7$-dimensional 
orbifold without $p$-isotropy.  This implies (2).

In order to see (3), observe that $\Gamma =\Z_p$ lies in the kernel of the action
of $\gN$ on $L$.  Thus we have a canonical action of $\gN/\Gamma$  (which is isomorphic to $\gN$)  on $L$. 
Consider now the canonical action of 
$\gN$ on $E\gN$ and via $\gN/\Gamma$ on $E(\gN/\Gamma )$.  Then, for the diagonal action of $\gN$ on $L\times E\gN \times
E(\gN/\Gamma )$, we see that $\tilde X$
is homotop to $L\times _{\gN^0}  (E\gN\times E (\gN/\Gamma ))$.  The canonical 
projection of this space  to $\tY := L\times _{\gN^0} E(\gN/\Gamma )$
is a fiber bundle with fiber  $B\Gamma$.  Moreover, the  structure group of this bundle is the
connected group $\gN^0$.

The restriction of each of the  $2$-dimensional subbundles to the fiber $B\Z_p$ 
is given by $E\Z_p\times_{(\Z_p,\rho_i)}\R^2$ where $\rho_1$ and $\rho_2$ are the two 
inequivalent faithful representations mentioned at the beginning. 
This proves (4).
\end{proof}

The last statement, namely $p\equiv 1 \mod 4$, implies
that  any endomorphism 
of order $4$ on any finite-dimensional $\fF_p$-vector space
is diagonalizable  with eigenvalues $\lambda\in \fF_p$ satisfying 
$\lambda^4=1\in \fF_p$.   In particular, it is true for the endomorphism 
 $\iota^*$ of $H^*(\tX,\fF_p)$.

  If   $e$ denotes the Euler class (with any coefficients) of the  bundle $\nu (\tX)$
and $e_i$ denote the Euler classes of the two $2$-dimensional subbbundles, then
the first  statement of the above lemma reads as follows:  $e_1 \cup e_2 = e$;
$\iota ^*$ preserves the set of four elements $\{\pm e_1,\pm e_2\}$;
and $(\iota ^* )^2(e_i)=-e_i$, for $i=1,2$.\\[1ex]  
{\bf Step 3.} Let $\mathcal I ^{\ast}$ denote the graded subalgebra of 
$H^{\ast} (\tX, \fF_p)$ that consists of  $\iota ^{\ast}$-invariant elements
divisible by the Euler class $e$ of $\nu (\tX)$. 
 Then $\dim (\mathcal I ^8 ) =1$ and $\mathcal I ^k =0$ for $0<k< 15, k\neq 8$.\\[1ex]
The natural map  $H^{\ast} (\hX, \fF_p )\rightarrow H^{\ast} (\tX, \fF_p )$ 
is injective and as in Case 2 its image is given by  the $\iota^{\ast}$-invariant elements. 
The subalgebra $\mathcal I ^{\ast}$ is thus isomorphic to the kernel of 
$H^{\ast} (\hX, \fF_p )\rightarrow H^{\ast} (\nu^1(\hX), \fF_p )$. 
Combining  Lemma~\ref{lem: relative}
and \lref{lem: cohomology} Step 3 follows.\\[1ex]
%
%
%
{\bf Step 4.} $H^1(\tX,\fF_p)= 0$.\\[1ex]
Otherwise,  choose a non-zero
eigenvector $w\in H^1(\tX,\fF_p)$ of $\iota^*$.
%
In the subspace $H^2(\tX,\fF_p)$ spanned by $e_1$ and $e_2$ we can find an
eigenvector  $f$ of $\iota^*$ which is not in kernel of the restriction to $H^2( B\Z_{p},\fF_p)$. 
Of course, the Euler class $e$ satisfies $\iota^* e =-e$.
Since $f^2$ restricts to a generator of $H^4 ( B\Z_{p},\fF_p)$,  we see that
  $\iota^*f=h f$ with $h^2\equiv -1$ mod $p$.

We claim that $w\cup f^l\cup e\neq 0$ for all $l\ge 0$. 
We choose a circle $\Sph^1\subset \tY$ in the base (see Step 2 (3)) such that $w$ restricts 
to a nonzero element in the first $\fF_p$ cohomology group of the inverse image 
$\tS$ of $\Sph^1$ in $\tX$. We get a  fiber bundle $B\Z_p\to \tS\to \Sph^1$ 
and since the structure group is connected this bundle must be trivial.
Since $f$ and $e$ restrict to nonzero elements 
of the $\fF_p$-cohomology of the fiber $B\Z_p$ the claim follows.

Depending on the eigenvalue of $w$, we  can choose
 some $l\in \{0,1,2,3\}$ such that $w\cup f^l \cup e$ is fixed by $\iota^*$.
The existence of this  non-zero element of $\mathcal I ^k$ with $k \in {5,7,9,11}$
contradicts Step 3. \\[1ex]
%
%
%
%
%
%
%
%
%
{\bf Step 5.} For all $j>0$, we have $\dim( H^{2j}(\tX,\fF_p))\ge 2$.
\begin{proof}
By the previous step $H^1(\tX,\fF_p)= 0$.  The  group
$H_1(\tX,\Z)$ is  finite without $p$-torsion, thus $H^2(\tX,\Z)$ does not have $p$-torsion either. 

Let $R$ be the ring obtained by localizing $\Z$ at $p$, i.e. 
\[
 R=\Z\bigl[\{1/q\mid \, \mbox{$q$ is a prime with } q\neq p\}\bigr]\subset \fQ
\]
From the universal coefficient theorem 
$H^1(\tX,R)=0$ and $H^2(\tX,R)=R^r$, for some $r$.  
Let $\he_1,\he_2\in H^2(\tX,R)$ denote the Euler classes  with $R$ coefficients 
of the two  $2$-dimensional subbundles of $\nu (\tX)$.    
 Due to Step 2 
 they restrict to generators of 
$H^2(B\Z_p,R)\cong \Z_p$. In particular $\he_i\neq 0$. 
Moreover $(\iota^*)^2 \he_i=-\he_i$. Thus $\iota^*$ acts as an endomorphism 
of order four  on  $H^2(\tX,R)=R^r$. Therefore $r\ge 2$.


We consider the fibration $B\Z_p\rightarrow \tX\rightarrow \tY$. 
Clearly $H^2(\tY,R)$ has rank at least $2$ as well.

We look at the Serre spectral sequence with coefficients in 
$R$.  Since  the action of the fundamental group on the cohomology of the fiber is trivial,  
the $E_2$ page is given by $E_2^{i,j}=H^i(\tY,H^j(B\Z _p,R))$.  
The $0$-th column $E_2^{0,j}$ survives throughout the sequence since $H^*(\tX,R)\to H^*(B\Z_p,R)$ is surjective. 
Therefore also the $0$-th entry $E_2^{2,0}$ of the second column survives  
 throughout. 
Notice that the odd entries in second column are zero while the even positive 
entries of the $E_2$-page are all isomorphic to $H^2(\tY,\Z_{p})$. For each of these even dimensional 
entries the natural image of $H^2(\tY,R)\to H^2(\tY,\Z_{p})$  
coincides with the image of  $E_2^{0,2j}\otimes E_2^{2,0}$ in $E^{2,2j}_2$ with respect to the multiplicative structure
since the multiplicative structure is induced by the cup product. 
Clearly these subgroups survive till the $E_\infty$ term.
Notice that the image of $H^2(\tY,R)$ in $H^2(\tY,\Z_p)$ is given by $(\Z_{p})^r$ 
for some $r\ge 2$. 
Therefore $H^{2k}(\tX,R)$ is the domain of a surjective homomorphism to $(\Z_p)^2$ for all positive $k$.
\end{proof}
%
%
%
%
%
%
%
%
%
%
%
%
%
%
%
A contradiction in Case 3 now arises as follows.
Since $\nu^1(\tX)$ can be seen as a resolution of a $7$-dimensional orbifold 
whose isotropy groups do not have $p$-torsion it follows that $H^i(\nu^1(\tX),\fF_p)=0$ for all $i\ge 8$.  
We see 
from the Gysin sequence for $\nu^1(\tX)$  that cupping with $e$ induces an isomorphism of the cohomology groups 
in degrees $\ge 5$.  
 Since $e=e_1\cup e_2$ the same holds for cupping with $e_1$. 
Moreover,  cupping with $e$  is surjective onto $H^8 (\tX , \fF_p )$.  

By Step 5 we can choose an eigenvector
$w\in H^8(\tX,\fF_p)$, which is linear independent to the fixed point $e^2$.


If $\iota^*w=w$, then   $\dim (\mathcal I ^8 ) \geq 2$.
If $\iota^*w=-w$, then $w\cup e\in H^{12}(\tX,\fF_p)$ would be a nonzero 
element of $\mathcal I ^{12}$. In both cases we get a contradiction to Step 3.  


Otherwise we have that $(\iota^*)^2 w=-w$. Then $w\cup e_1$ is a nonzero fixed point of $(\iota^*)^2$.
This in turn implies that $H^{10}(\tX,\fF_p)$ contains an eigenvector of $\iota^*$ to the eigenvalue 
of $1$ or $-1$. In the latter case cupping with $e$ gives a nontrivial element of
$\mathcal I ^{14}$.  In  the former case we have a nonzero element in $\mathcal I ^{10}$,
providing a contradiction to Step 3  in both cases.


\section{Final Remarks.} \label{sec: 2 sph}

In summary we have ruled out all orbifold singularties in $B$. Thus $B$ is a Riemannian manifold, 
and $\mathcal F$ is given by the fibers of a Riemannian submersion $M\rightarrow B$. 
 By \cite{Browder}  (or  Lemma~\ref{Borel}),  it  follows that we are in the spherical case.
From the homotopy sequence of the fiber bundle, we see that the base $B$ of the submersion is a homotopy sphere,
hence $B$ is a topological sphere. This finishes the proof of Theorem~\ref{thm: main}.

\begin{rem} It is well known (\cite{Shimada}) that there are many exotic $15$-spheres that fiber over $\Sph^8$. 
Of course, the base manifold $B$ in part c) of the main theorem can also be an exotic sphere, in fact
one can just pull back the Hopf fibration to the exotic $8$-sphere by a smooth degree $1$ map from the exotic $8$-sphere 
to $\Sph^8$.
What is not known however is whether the fibers of such a fibration can be exotic 
$7$-spheres. This seems to be closely related to the question 
on how closely the diffeomorphism group of an exotic $7$-sphere is linked to the diffeomorphism 
group of $\Sph^7$.
\end{rem}

\bibliographystyle{alpha}
\bibliography{sphfibration}

\end{document}